\theoremstyle{plain}
\newtheorem{theorem}{Theorem}[section] 
\newtheorem{proposition}[theorem]{Proposition}                          
\newtheorem{lemma}[theorem]{Lemma}
\newtheorem{corollary}[theorem]{Corollary}
\theoremstyle{definition}
\theoremstyle{remark}
\newtheorem{remark}[theorem]{Remark}
\begin{document}

\date{Preprint, accepted in ALEA, March 2010}
\keywords{random matrices, 
  reversible Markov chains, 
  random walks, random environment, spectral gap, Wigner's
  semi--circle law, arc--sine law, tridiagonal matrices, birth-and-death
  processes, spectral analysis, 
  homogenization.} 
\subjclass{15A52; 60K37; 60F15; 62H99; 37H10; 47B36.}

\author{Charles~Bordenave} \address{Institut de Math\'ematiques de Toulouse
  UMR5219 CNRS Universit\'e de Toulouse, France}
\email{charles.bordenave(at)math.univ-toulouse.fr}

\author{Pietro~Caputo}
\address{Dipartimento di Matematica, Universit\`a Roma Tre, Italy} 
\email{caputo(at)mat.uniroma3.it}

\author{Djalil~Chafa{\"{\i}}} 
\address{LAMA UMR8050 CNRS Universit\'e
  Paris-Est Marne-la-Vall\'ee, France} 
\email{djalil(at)chafai.net}

\title[Spectrum of large random reversible Markov chains]{Spectrum of large random reversible Markov chains: \\ two examples} 

\begin{abstract}
  We take on a Random Matrix theory viewpoint to study the spectrum of certain
  reversible Markov chains in random environment. As the number of states
  tends to infinity, we consider the global behavior of the spectrum, and the
  local behavior at the edge, including the so called spectral gap. Results
  are obtained for two simple models with distinct limiting features. The
  first model is built on the complete graph while the second is a
  birth-and-death dynamics. Both models give rise to random matrices with non
  independent entries.
\end{abstract}

\maketitle

\newcommand{\wt}{\widetilde} 
\newcommand{\scalar}[2]{\langle #1 , #2\rangle}
\newcommand{\what}[1][]{\ensuremath{\clubsuit\otimes\clubsuit}\texttt{ #1}}

\section{Introduction}

The spectral analysis of large dimensional random matrices is a very active
domain of research, connected to a remarkable number of areas of Mathematics,
see e.g.\
\cite{MR2129906,MR1746976,MR1711663,MR1864966,anderson-gionnet-zeitouni,MR2432537}.
On the other hand, it is well known that the spectrum of reversible Markov
chains provides useful information on their trend to equilibrium, see e.g.\
\cite{MR1490046,MR2166276,tetali-montenegro,MR2466937}. The aim of this paper
is to explore potentially fruitful links between the Random Matrix and the
Markov Chains literature, by studying the spectrum of reversible Markov chains
with large finite state space in a frozen random environment. The latter is
obtained by assigning random weights to the edges of a finite graph. This
approach raises a collection of stimulating problems, lying at the interface
between Random Matrix theory, Random Walks in Random Environment, and Random
Graphs. We focus here on two elementary models with totally different scalings
and limiting objects: a complete graph model and a chain graph model. The
study of spectral aspects of random Markov chains or random walks in random
environment is not new, see for instance
\cite{cheliotis-virag,MR1956471,MR2071631,MR2370603,MR2152251,MR1890289,MR2198849}
and references therein. Here we adopt a Random Matrix theory point of view.

Consider a finite connected undirected graph $G=(V,E)$, with vertex set $V$
and edge set $E$, together with a set of weights, given by nonnegative random
variables
$$
\mathbf{U}=\{U_{i,j};\{i,j\}\in E\}.
$$
Since the graph $G$ is undirected we set $U_{i,j}=U_{j,i}$. On the network
$(G,\mathbf{U})$, we consider the random walk in random environment with state
space $V$ and transition probabilities
\begin{equation}\label{eq:srw}
K_{i,j} = \frac{U_{i,j}}{\rho_i}
\quad\text{where}\quad\rho_i=\sum_{j:\{i,j\}\in E} U_{i,j}.
\end{equation}
The Markov kernel $K$ is reversible with respect to the measure
$\rho=\{\rho_i\,,\;i\in V\}$ in that 
$$
\rho_i K_{i,j} = \rho_j K_{j,i}
$$
for all $i,j\in V$. When the variables $\mathbf{U}$ are all equal to a
positive constant this is just the standard simple random walk on $G$, and
$K-I$ is the associated Laplacian. If $\rho_{i_0}=0$ for some vertex $i_0$
then we set $K_{i_0,j}=0$ for all $j\neq i_0$ and $K_{i_0,i_0}=1$ ($i_0$ is
then an isolated vertex).

The construction of reversible Markov kernels from graphs with weighted edges
as in \eqref{eq:srw} is classical in the Markovian literature, see e.g.\
\cite{MR2166276,MR920811}. As for the choice of the graph $G$, we shall work
with the simplest cases, namely the complete graph or a one--dimensional chain
graph. Before passing to the precise description of models and results, let us
briefly recall some broad facts.

By labeling the $n=|V|$ vertices of $G$ and putting $K_{i,j}=0$ if
$\{i,j\}\not\in E$, one has that $K$ is a random $n\times n$ Markov matrix.
The entries of $K$ belong to $[0,1]$ and each row sums up to $1$. The spectrum
of $K$ does not depend on the way we label $V$. In general, even if the random
weights $\mathbf{U}$ are i.i.d.\ the random matrix $K$ has non--independent
entries due to the normalizing sums $\rho_i$. Note that $K$ is in general
non--symmetric, but by reversibility, it is symmetric w.r.t.\ the scalar product induced by $\rho$, 
and its
spectrum $\sigma(K)$ is real. Moreover, $1\in\sigma(K)\subset[-1,+1]$, and it
is convenient to denote the eigenvalues of $K$ by
$$
-1\leq\lambda_n(K)\leq\cdots\leq\lambda_1(K)=1.
$$
If the weights $U_{i,j}$ are all positive, then $K$ is irreducible, the
eigenspace of the largest eigenvalue $1$ is one--dimensional and thus
$\lambda_2(K)<1$. In this case $\rho_i$ is its unique invariant distribution,
up to normalization. Moreover, since $K$ is reversible, the period of $K$ is
$1$ (aperiodic case) or $2$, and this last case is equivalent to
$\lambda_n(K)=-1$ (the spectrum of $K$ is in fact symmetric when $K$ has
period $2$); see e.g.\ \cite{MR2209438}. 

The \emph{bulk behavior} of $\sigma(K)$ is studied via the Empirical Spectral
Distribution (ESD)
$$
\mu_K=\frac1n\sum_{k=1}^n \delta_{\lambda_k(K)}.
$$
Since $K$ is Markov, its ESD contains probabilistic information on the
corresponding random walk. Namely, the moments of the ESD $\mu_K$ satisfy, for
any $\ell\in\mathds{Z}_+$
\begin{equation}\label{eq:moms}
  \int_{-1}^{+1}\!x^\ell\mu_K(dx) %
  = \frac1n \mathrm{Tr}(K^\ell)= \frac1n\sum_{i\in V} r^\mathbf{U}_\ell(i)
\end{equation}
where $r^\mathbf{U}_\ell(i)$ denotes the probability that the random walk on
$(G,{\bf U})$ started at $i$ returns to $i$ after $\ell$ steps. 

The \emph{edge behavior} of $\sigma(K)$ corresponds to the extreme eigenvalues
$\lambda_2(K)$ and $\lambda_n(K)$, or more generally, to the $k$--extreme
eigenvalues $\lambda_2(K),\dots,\lambda_{k+1}(K)$ and
$\lambda_{n}(K),\dots,\lambda_{n-k+1}(K)$. The geometric decay to the
equilibrium measure $\rho$ of the \emph{continuous time} random walk with
semigroup ${(e^{t(K-I)})}_{t\geq0}$ generated by $K-I$ is governed by the so
called \emph{spectral gap}
$$
\mathrm{gap}(K-I) = 1-\lambda_2(K).
$$
In the aperiodic case, the relevant quantity for the \emph{discrete time}
random walk with kernel $K$ is
$$
\varsigma(K)=1-\max_{\substack{\lambda\in\sigma(K)\\\lambda\neq1}}|\lambda|
= 1-\max(-\lambda_n(K),\lambda_2(K))\,.
$$
In that case, for any fixed value of $n$, we have $(K^\ell)_{i,\cdot}\to\rho$
as $\ell\to\infty$, for every $1\leq i\leq n$. We refer to e.g.\
\cite{MR1490046,MR2466937} for more details.

\subsection*{Complete graph model}

Here we set $V=\{1,\dots,n\}$ and $E=\{\{i,j\};i,j\in V\}$. Note that we have
a loop at any vertex. The weights $U_{i,j}$, $1\leq i\leq j\leq n$ are i.i.d.\
random variables with common law $\mathcal{L}$ supported on $[0,\infty)$. The
law $\mathcal{L}$ is independent of $n$. Without loss of generality, we assume
that the marks $\mathbf{U}$ come from the truncation of a single infinite
triangular array $(U_{i,j})_{1\leq i\leq j}$ of i.i.d.\ random variables of
law $\mathcal{L}$. This defines a common probability space, which is
convenient for almost sure convergence as $n\to\infty$.

When $\mathcal{L}$ has finite mean $\int_0^\infty x\,\mathcal{L}(dx) = m$ we
set $m=1$. This is no loss of generality since $K$ is invariant under the
linear scaling $t\to t\,U_{i,j}$. If $\mathcal{L}$ has a finite second moment
we write $\sigma^2 = \int_0^\infty(x-1)^2\,\mathcal{L}(dx)$ for the variance.
The rows of $K$ are equally distributed (but not independent) and follow an
exchangeable law on $\mathds{R}^n$. Since each row sums up to one, we get by
exchangeability that for every $1\leq i,j\neq j'\leq n$,
$$
\mathds{E}(K_{i,j})=\frac{1}{n} %
\quad\text{and}\quad %
\mathrm{Cov}(K_{i,j},K_{i,j'})=-\frac{1}{n-1}\mathrm{Var}(K_{1,1}).
$$
Note that $\mathcal{L}$ may have an atom at $0$, i.e.\
$\mathds{P}(U_{i,j}=0)=1-p$, for some $p\in(0,1)$. In this case $K$ describes
a random walk on a weighted version of the standard Erd\H{o}s-R\'enyi $G(n,p)$
random graph. Since $p$ is fixed, almost surely (for $n$ large enough) there
is no isolated vertex, the row-sums $\rho_i$ are all positive, and $K$ is
irreducible.

The following theorem states that if $\mathcal{L}$ has finite positive
variance $0<\sigma^2<\infty$, then the bulk of the spectrum of $\sqrt{n}K$
behaves as if we had a Wigner matrix with 
i.i.d.\ entries, i.e.\ as if
$\rho_i\equiv n$. 
We refer to e.g.\ \cite{MR1711663,anderson-gionnet-zeitouni} for more on Wigner matrices and the semi--circle law. 
The ESD of $\sqrt{n}K$ is
$\mu_{\sqrt{n}K}=\frac{1}{n}\sum_{k=1}^n\delta_{\sqrt{n}\lambda_k(K)}.$ 

\begin{theorem}[Bulk behavior]\label{th:wigner}
  If $\mathcal{L}$ has finite positive variance $0<\sigma^2<\infty$ then 
  $$
  \mu_{\sqrt{n}K} %
  \underset{n\to\infty}{\overset{w}{\longrightarrow}} %
  \mathcal{W}_{2\sigma}
  $$
  almost surely, where ``$\overset{w}{\to}$'' stands for weak convergence of
  probability measures and $\mathcal{W}_{2\sigma}$ is Wigner's semi--circle
  law with Lebesgue density
  \begin{equation}\label{semi}
  x\mapsto
  \frac{1}{2\pi\sigma^2}\sqrt{4\sigma^2-x^2}
  \,\,\mathds{1}_{\left[-2\sigma,+2\sigma\right]}(x)\,.
  \end{equation}
\end{theorem}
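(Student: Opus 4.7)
The plan is to reduce the statement to the classical Wigner semicircle theorem by means of two successive perturbation arguments, exploiting the fact that although $K$ is non--symmetric its reversibility makes it similar to an explicit symmetric matrix with the same spectrum.

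First, symmetrize: let $U=(U_{i,j})_{i,j=1}^{n}$ denote the symmetric weight matrix (with $U_{j,i}:=U_{i,j}$) and $D:=\mathrm{diag}(\rho_1,\dots,\rho_n)$, so that $K=D^{-1}U$. The conjugate $S:=D^{1/2}KD^{-1/2}=D^{-1/2}UD^{-1/2}$ is symmetric and $\sigma(S)=\sigma(K)$. Since $\mathds{E}U_{i,j}=1>0$, a routine Borel--Cantelli argument shows $\min_i\rho_i>0$ almost surely for $n$ large, so $D^{1/2}$ is well defined. In particular $\mu_{\sqrt{n}K}=\mu_{\sqrt{n}S}$, and it remains to describe the bulk of the symmetric matrix $\sqrt{n}S$.

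Second, approximate $\sqrt{n}S$ by the Wigner-type matrix $W:=U/\sqrt{n}$. Entrywise,
\[
(\sqrt{n}S-W)_{i,j}=\frac{U_{i,j}}{\sqrt{n}}\left(\frac{n}{\sqrt{\rho_i\rho_j}}-1\right),
\]
and since $\mathrm{Var}(\rho_i/n)=\sigma^{2}/n$, a second--moment computation (after a standard truncation $U_{i,j}\wedge n^{\alpha}$ with $\alpha\in(1/2,1)$ if $\mathcal{L}$ is heavy tailed, together with Borel--Cantelli) yields
\[
\frac{1}{n}\|\sqrt{n}S-W\|_{\mathrm{HS}}^{2}\underset{n\to\infty}{\longrightarrow}0\qquad\text{a.s.}
\]
By the Hoffman--Wielandt inequality this bounds the $2$-Wasserstein distance between $\mu_{\sqrt{n}S}$ and $\mu_W$, so the two have the same almost sure weak limit. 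Now decompose $W=\tilde W+n^{-1/2}J$, where $\tilde W_{i,j}:=(U_{i,j}-1)/\sqrt{n}$ and $J$ is the all--ones matrix. Since $J$ has rank one, the standard rank inequality for empirical distribution functions gives $\|F_W-F_{\tilde W}\|_\infty\le 1/n$, so $\mu_W$ and $\mu_{\tilde W}$ share their weak limit. But $\tilde W$ is a standard Wigner matrix with i.i.d.\ (up to symmetry) centred entries of variance $\sigma^{2}$, and the classical semicircle theorem yields $\mu_{\tilde W}\to\mathcal{W}_{2\sigma}$ almost surely. Chaining the three approximations gives the conclusion.

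The main obstacle is the Hilbert--Schmidt bound above. Setting $\eta_i:=\sqrt{n/\rho_i}-1$, one has $n/\sqrt{\rho_i\rho_j}-1=\eta_i+\eta_j+\eta_i\eta_j$, whence
\[
\frac{1}{n}\|\sqrt{n}S-W\|_{\mathrm{HS}}^{2}\le\frac{C}{n^{2}}\sum_{i,j}U_{i,j}^{2}\bigl(\eta_i^{2}+\eta_j^{2}+\eta_i^{2}\eta_j^{2}\bigr).
\]
The row averages $n^{-1}\sum_{j}U_{i,j}^{2}$ concentrate around $1+\sigma^{2}$, while $\mathds{E}\eta_i^{2}=O(1/n)$, so in expectation the right--hand side is $O(1/n)$; upgrading this to almost sure convergence under only the assumption $\sigma^{2}<\infty$ is the delicate point, and it is where a truncation argument preserving mean and variance up to $o(1)$, together with the fact that each individual $U_{i,j}$ enters only two of the row sums, proves useful.
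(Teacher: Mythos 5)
Your overall architecture is exactly the paper's: symmetrize $K$ into $S=D^{-1/2}UD^{-1/2}$, compare $\sqrt{n}S$ with $U/\sqrt{n}$ via Hoffman--Wielandt, and dispose of the all-ones rank-one part to land on a centred Wigner matrix. The problem is that the one step you yourself identify as ``the delicate point'' --- the almost sure Hilbert--Schmidt estimate $\frac{1}{n}\|\sqrt{n}S-U/\sqrt{n}\|_{\mathrm{HS}}^{2}\to 0$ --- is never actually proved. You observe that the right-hand side of your bound is $O(1/n)$ \emph{in expectation} and then defer the a.s.\ upgrade to an unspecified ``truncation argument.'' This does not close: an $O(1/n)$ bound on the expectation is not summable in $n$, so Borel--Cantelli applied to it gives nothing along the full sequence; the quantities $U_{i,j}^{2}$ and $\eta_i^{2}$ are correlated (each $\eta_i$ is a function of the $i$-th row), so the factorization you implicitly use in the expectation computation needs justification; and $\mathds{E}\eta_i^{2}=O(1/n)$ itself is not immediate, since $\eta_i=\sqrt{n/\rho_i}-1$ blows up when $\rho_i$ is small and $\mathcal{L}$ may have an atom at $0$.

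The missing ingredient, which is the actual content of the paper's proof, is a \emph{uniform} strong law of large numbers: under the sole assumption $\mathds{E}(U_{1,1}^{2})<\infty$ one has $\delta_n:=\max_{1\leq i\leq n}|\rho_i/n-1|\to 0$ almost surely. This is a Kolmogorov--Marcinkiewicz--Zygmund / Baum--Katz type maximal result (the paper invokes it as Lemma \ref{le:LLN-gen}, with a reference handling the mild dependence created by the symmetry $U_{i,j}=U_{j,i}$); it is genuinely stronger than the row-by-row law of large numbers because it controls all $n$ rows simultaneously. Once you have it, everything else is soft: $\eta_i=O(\delta_n)$ uniformly in $i$, hence
\begin{equation*}
\frac{1}{n}\bigl\|\sqrt{n}S-U/\sqrt{n}\bigr\|_{\mathrm{HS}}^{2}
\leq O(\delta_n^{2})\cdot\frac{1}{n^{2}}\sum_{i,j}U_{i,j}^{2}
= O(\delta_n^{2})\,(1+\sigma^{2}+o(1)) = o(1)
\end{equation*}
almost surely, using only the ordinary strong law for $\frac{1}{n^{2}}\sum_{i,j}U_{i,j}^{2}$. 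I recommend you either cite such a uniform LLN explicitly or prove it (truncation at level $\varepsilon n$ plus a fourth-moment/exponential bound on the truncated maxima is one route); as written, your argument has a hole precisely where the theorem's difficulty lies. The rest of your proposal (the rank-one removal of $n^{-1/2}J$ and the appeal to the classical semicircle law for the centred matrix) is correct and matches the paper.
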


The proof of Theorem \ref{th:wigner}, given in Section \ref{se:wigner}, relies
on a uniform strong law of large numbers which allows to estimate $\rho_i =
n(1+o(1))$ and therefore yields a comparison of $\sqrt{n}K$ with a suitable
Wigner matrix with i.i.d.\  entries. Note that, even though
\begin{equation}\label{eq:lamax}
  \lambda_1(\sqrt{n}K)=\sqrt{n}\to\infty \quad\text{as}\quad n\to\infty,
\end{equation}
the weak limit of $\mu_{\sqrt{n}K}$ is not affected since
$\lambda_1(\sqrt{n}K)$ has weight $1/n$ in $\mu_{\sqrt{n}K}$. Theorem
\ref{th:wigner} implies that the bulk of $\sigma(K)$ collapses 
weakly 
at speed $n^{-1/2}$. Concerning the extremal eigenvalues
$\lambda_n(\sqrt{n}K)$ and $\lambda_2(\sqrt{n}K)$, we only get from Theorem
\ref{th:wigner} that almost surely, for every fixed $k\in\mathds{Z}_+$,
$$
\liminf_{n\to\infty}\sqrt{n}\lambda_{n-k}(K) \leq -2\sigma %
\quad\text{and}\quad %
\limsup_{n\to\infty}\sqrt{n}\lambda_{k+2}(K) \geq +2\sigma.
$$
The result below gives the behavior of the extremal eigenvalues under the
assumption that $\mathcal{L}$ has finite fourth moment (i.e.\
$\mathds{E}(U_{1,1}^4)<\infty$).

\begin{theorem}[Edge behavior]\label{th:wigneredge}
  If $\mathcal{L}$ has finite positive variance $0<\sigma^2<\infty$ and finite
  fourth moment then almost surely, for any fixed $k\in\mathds{Z}_+$,
  $$
  \lim_{n\to\infty}\sqrt{n}\lambda_{n-k}(K)=-2\sigma
  \quad\text{and}\quad
  \lim_{n\to\infty}\sqrt{n}\lambda_{k+2}(K)=+2\sigma.
  $$
  In particular, almost surely,
  \begin{equation}\label{edge}
  \mathrm{gap}(K-I)=1-\frac{2\sigma}{\sqrt{n}}+o\left(\frac{1}{\sqrt{n}}\right)
  \quad\text{and}\quad
  \varsigma(K)=1-\frac{2\sigma}{\sqrt{n}}+o\left(\frac{1}{\sqrt{n}}\right).
  \end{equation}
\end{theorem}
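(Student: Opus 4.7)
The plan is to compare $\sqrt{n}K$ with a Wigner matrix up to a rank--one positive semidefinite perturbation and a bilateral conjugation by a diagonal matrix close to the identity, and then to invoke the Bai--Yin theorem. Write $R:=\mathrm{diag}(\rho_1,\ldots,\rho_n)$ and let $U$ be the symmetric matrix of weights. Reversibility reads $RK=U$, so $K$ is similar to the symmetric matrix $S:=R^{-1/2}\,U\,R^{-1/2}$ and has the same real spectrum. With $D:=\mathrm{diag}((n/\rho_i)^{1/2})$ and $M_{i,j}:=U_{i,j}/\sqrt{n}$, one checks $\sqrt{n}\,S=DMD$, hence $\sqrt{n}\,\lambda_j(K)=\lambda_j(DMD)$ for every $j$. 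Split $M=X+n^{-1/2}J$ with $J$ the all--ones matrix and
$$
X_{i,j}:=(U_{i,j}-1)/\sqrt{n},
$$
so that $X$ is a Wigner matrix in the standard normalization, with i.i.d.\ centered entries of variance $\sigma^2/n$.

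Since $U_{1,1}$ has a finite fourth moment, the Bai--Yin theorem (see e.g.\ \cite{MR1711663,anderson-gionnet-zeitouni}) yields almost surely, for every fixed $k\in\mathds{Z}_+$,
$$
\lim_{n\to\infty}\lambda_{k+1}(X)=2\sigma\qquad\text{and}\qquad\lim_{n\to\infty}\lambda_{n-k}(X)=-2\sigma,
$$
in particular $\|X\|_{\mathrm{op}}\to 2\sigma$ a.s. To transfer these limits to $DXD$, I would establish the almost--sure uniform estimate
$$
\|D-I\|_{\mathrm{op}}=\max_{1\leq i\leq n}\bigl|(n/\rho_i)^{1/2}-1\bigr|\xrightarrow[n\to\infty]{}0.
$$
Since $\rho_i-n$ is a sum of $n$ i.i.d.\ centered variables with finite fourth moment, the Marcinkiewicz--Zygmund inequality gives $\mathds{E}|\rho_i-n|^4=O(n^2)$, from which a Markov--union bound--Borel--Cantelli argument (sharpened if necessary by a preliminary truncation of the $U_{i,j}$ consistent with the truncation in the proof of Bai--Yin) yields the displayed convergence. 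Combined with the bound on $\|X\|_{\mathrm{op}}$, the identity $DXD-X=(D-I)XD+X(D-I)$ gives $\|DXD-X\|_{\mathrm{op}}\to 0$ a.s., and Weyl's inequality transfers the edge asymptotics from $X$ to $DXD$.

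Finally, $DMD=DXD+n^{-1/2}\,d\,d^\top$ with $d:=D\mathbf{1}$ exhibits $DMD$ as a rank--one positive semidefinite perturbation of $DXD$ (and accounts for the spike $\lambda_1(\sqrt{n}K)=\sqrt{n}$). The eigenvalue interlacing inequalities for rank--one perturbations give, for every fixed $k\in\mathds{Z}_+$,
$$
\lambda_{k+2}(DXD)\leq\lambda_{k+2}(DMD)\leq\lambda_{k+1}(DXD)
$$
and
$$
\lambda_{n-k}(DXD)\leq\lambda_{n-k}(DMD)\leq\lambda_{n-k-1}(DXD),
$$
with both ends of each sandwich tending respectively to $2\sigma$ and $-2\sigma$. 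Since $\sqrt{n}\,\lambda_j(K)=\lambda_j(DMD)$, this gives the two claimed limits; the asymptotics for $\mathrm{gap}(K-I)$ and $\varsigma(K)$ follow by taking $k=0$.

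The main technical obstacle is the almost--sure uniform control of $\max_i|\rho_i/n-1|$ under only a fourth moment: the direct estimate $\mathds{P}(|\rho_i/n-1|>\epsilon)=O(n^{-2})$ combined with a union bound over $i$ is only $O(n^{-1})$, which is barely not Borel--Cantelli summable, so a slightly sharper concentration input (via a truncation argument, for instance) is required. All the remaining ingredients --- Bai--Yin, Weyl's inequality, and rank--one interlacing --- are entirely standard.
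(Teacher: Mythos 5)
Your argument is correct, and it takes a genuinely different route from the paper's. The paper also passes to the symmetric matrix $S$ (its Lemma \ref{le:equiv}), but then removes the Perron eigenvalue by subtracting the \emph{exact} rank--one projector $P$ onto $\sqrt{\rho}$ and compares the quadratic form of $\sqrt{n}(S-P)$ with that of the centered Wigner matrix $W$ uniformly over unit vectors orthogonal to $\sqrt{\rho}$ (Lemma \ref{le:control}); this requires a careful expansion in the quantities $\psi_i=(1+\epsilon_i)^{-1/2}-1$ and, crucially, the $\ell^2$ bound $\sum_i\epsilon_i^2=O(1)$, which the authors extract from $\|W\|^2=O(1)$ under the fourth--moment hypothesis. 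Moreover, that comparison only yields the upper bound $\limsup\sqrt{n}\max(|\lambda_2(S)|,|\lambda_n(S)|)\leq2\sigma$, and the matching lower bound is imported from the bulk result (Theorem \ref{th:wigner}). Your decomposition $\sqrt{n}S=DXD+n^{-1/2}dd^\top$ replaces the additive projector subtraction by a multiplicative conjugation plus rank--one PSD interlacing, and the comparison $\|DXD-X\|\to0$ needs only $\|D-I\|\to0$ and $\|X\|=O(1)$ in operator norm; this is cleaner, gives two--sided control of each fixed--index eigenvalue in one stroke, and avoids the paper's expansion entirely. (Your appeal to ``Bai--Yin'' for $\lambda_{k+1}(X)\to2\sigma$ at fixed $k\geq1$ implicitly combines the norm convergence with the semicircle law for the lower bound, which is the standard and correct way to get it.)

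The obstacle you flag at the end is not actually a gap: the estimate $\max_{1\leq i\leq n}|\rho_i/n-1|\to0$ a.s.\ is precisely the paper's uniform law of large numbers, Lemma \ref{le:LLN}, equation \eqref{eq:LLN1}, which holds under a \emph{second} moment alone. It is obtained from a Baum--Katz/Marcinkiewicz--Zygmund--type result (Lemma \ref{le:LLN-gen}, quoted from the literature) rather than from a direct Markov--plus--union bound, exactly because, as you observe, the naive fourth--moment bound gives only $O(n^{-1})$ and is not Borel--Cantelli summable; a truncation or subsequence argument of the kind you sketch is indeed how such results are proved. Under the fourth--moment hypothesis one can alternatively note, as the paper does for \eqref{eq:LLN3}, that $\sum_i\epsilon_i^2=\scalar{W1}{W1}/n\leq\|W\|^2=O(1)$, which trivially implies $\max_i|\epsilon_i|=O(1)$ and, combined with \eqref{eq:LLN1}, everything you need. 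So your proof is complete once you cite such a uniform LLN.
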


The proof of Theorem \ref{th:wigneredge}, given in Section \ref{se:wigner},
relies on a suitable rank one reduction which allows us to compare
$\lambda_2(\sqrt{n}K)$ with the largest eigenvalue of a Wigner matrix with
centered entries. This approach also requires a refined version of the uniform
law of large numbers used in the proof of Theorem \ref{th:wigner}.

The edge behavior of Theorem \ref{th:wigneredge} allows one to reinforce 
Theorem \ref{th:wigner} by providing convergence of moments. Recall that for any integer $p\geq1$, the weak
convergence together with the convergence of moments up to order $p$ is
equivalent to the convergence in Wasserstein $W_p$ distance, see e.g.\ 
\cite{MR1964483}. For every real $p\geq1$, the Wasserstein distance
$W_p(\mu,\nu)$ between two probability measures $\mu,\nu$ on $\mathds{R}$ is
defined by
\begin{equation}\label{eq:Wp}
  W_p(\mu,\nu) = \inf_{\Pi}%
  \left(\int_{\mathds{R}\times\mathds{R}}\!|x-y|^p\,\Pi(dx,dy)\right)^{1/p}
\end{equation}
where the infimum runs over the convex set of probability measures on
$\mathds{R}^2=\mathds{R}\times\mathds{R}$ with marginals $\mu$ and $\nu$. Let
$\wt\mu_{\sqrt{n}K}$ be the trimmed ESD defined by
$$
\wt\mu_{\sqrt{n}K} %
 = \frac{1}{n-1}\sum_{k=2}^n\delta_{\sqrt{n}\lambda_k(K)} %
 = \frac{n}{n-1}\mu_{\sqrt{n}K}-\frac{1}{n-1}\delta_{\sqrt{n}}.
$$
We have then the following Corollary of theorems \ref{th:wigner} and
\ref{th:wigneredge}, proved in Section \ref{se:wigner}.

\begin{corollary}[Strong convergence]\label{co:wignerstrong}
  If $\mathcal{L}$ has positive variance and finite fourth moment then
  almost surely, for every $p\geq1$,
  $$
  \lim_{n\to\infty} W_p(\wt\mu_{\sqrt{n}K},\mathcal{W}_{2\sigma}) = 0%
  \quad\text{while}\quad %
  \lim_{n\to\infty} W_p(\mu_{\sqrt{n}K},\mathcal{W}_{2\sigma}) %
  = %
  \begin{cases}
    0 & \text{if $p<2$} \\
    1 & \text{if $p=2$} \\
    \infty & \text{if $p>2$}.
  \end{cases}
  $$
\end{corollary}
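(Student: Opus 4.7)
The strategy is to first control the trimmed measure $\wt\mu_{\sqrt{n}K}$, where Theorems \ref{th:wigner} and \ref{th:wigneredge} combine cleanly, and then quantify precisely how reintroducing the outlier atom at $\sqrt{n}\lambda_1(K)=\sqrt{n}$ perturbs $W_p$. By Theorem \ref{th:wigneredge}, for every $\varepsilon>0$, almost surely for all $n$ large enough $\mathrm{supp}(\wt\mu_{\sqrt{n}K})\subset[-2\sigma-\varepsilon,2\sigma+\varepsilon]$. Since $\wt\mu_{\sqrt{n}K}$ differs from $\mu_{\sqrt{n}K}$ only by an atom of mass $1/n$, Theorem \ref{th:wigner} also gives $\wt\mu_{\sqrt{n}K}\to\mathcal{W}_{2\sigma}$ weakly almost surely, and together with the uniform compact support this yields $\int|x|^q\,d\wt\mu_{\sqrt{n}K}\to\int|x|^q\,d\mathcal{W}_{2\sigma}$ for every $q\geq 1$. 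The equivalence between $W_p$--convergence and weak convergence plus convergence of the $p$-th absolute moment (recalled in the statement) then gives $W_p(\wt\mu_{\sqrt{n}K},\mathcal{W}_{2\sigma})\to 0$ almost surely for every $p\geq 1$, which is the first half of the corollary.

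Next, I would compute $W_p(\mu_{\sqrt{n}K},\wt\mu_{\sqrt{n}K})$ explicitly. Ordering the eigenvalues $\mu_1\leq\cdots\leq\mu_n$ of $\sqrt{n}K$ (so $\mu_n=\sqrt{n}$), the quantile-function representation of $W_p$ on $\mathds{R}$ applied to the two piecewise-constant quantile functions $F_{\mu_{\sqrt{n}K}}^{-1}$ and $F_{\wt\mu_{\sqrt{n}K}}^{-1}$ yields after a short bookkeeping
\[
W_p^p(\mu_{\sqrt{n}K},\wt\mu_{\sqrt{n}K})=\sum_{k=1}^{n-1}\frac{k}{n(n-1)}(\mu_{k+1}-\mu_k)^p.
\]
By Theorem \ref{th:wigneredge}, for $k\leq n-2$ both $\mu_k,\mu_{k+1}\in[-2\sigma-\varepsilon,2\sigma+\varepsilon]$ eventually, and the telescoping bound $\sum_{k\leq n-2}(\mu_{k+1}-\mu_k)^p\leq(4\sigma+2\varepsilon)^{p-1}(\mu_{n-1}-\mu_1)=O(1)$ shows these terms contribute at most $O(1/n)=o(1)$. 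The remaining term $k=n-1$ equals $\frac{1}{n}(\sqrt{n}-\mu_{n-1})^p$; since $\mu_{n-1}=\sqrt{n}\lambda_2(K)\to 2\sigma$, it behaves as $n^{p/2-1}(1+o(1))$, tending to $0$ for $p<2$, to $1$ for $p=2$, and to $+\infty$ for $p>2$.

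Combining the two steps through the triangle inequality
\[
\bigl|W_p(\mu_{\sqrt{n}K},\mathcal{W}_{2\sigma})-W_p(\mu_{\sqrt{n}K},\wt\mu_{\sqrt{n}K})\bigr|\leq W_p(\wt\mu_{\sqrt{n}K},\mathcal{W}_{2\sigma})\longrightarrow 0
\]
transfers the three regimes above to $W_p(\mu_{\sqrt{n}K},\mathcal{W}_{2\sigma})$, completing the corollary. The delicate point is the case $p=2$: one needs the \emph{exact} constant $1$, not merely an upper bound. This forces the use of the explicit quantile-function identity (a generic suboptimal transport would yield only an upper bound) together with the sharp edge convergence $\sqrt{n}\lambda_2(K)=2\sigma+o(1)$ from Theorem \ref{th:wigneredge}, without which the factor $(\sqrt{n}-\mu_{n-1})^2/n=1+o(1)$ could not be pinned down.
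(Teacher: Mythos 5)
Your proposal is correct and follows essentially the same route as the paper: first $W_p(\wt\mu_{\sqrt{n}K},\mathcal{W}_{2\sigma})\to0$ from the uniform compact support given by Theorem \ref{th:wigneredge} together with weak convergence, then the outlier atom at $\sqrt{n}$ is handled through the explicit quantile-function identity for $W_p$ on $\mathds{R}$, which is exactly what pins down the constant $1$ at $p=2$. The only (cosmetic) difference is that you compute $W_p(\mu_{\sqrt{n}K},\wt\mu_{\sqrt{n}K})$ and conclude by the triangle inequality, whereas the paper substitutes the decomposition of $F_{\mu_{\sqrt{n}K}}^{-1}$ directly into $W_p(\mu_{\sqrt{n}K},\mathcal{W}_{2\sigma})^p$; your explicit gap formula and the telescoping bound on the bulk gaps are a clean and valid way to organize the same computation.
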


Recall that for every $k\in\mathds{Z}_+$, the $k^\text{th}$ moment of the
semi--circle law $\mathcal{W}_{2\sigma}$ is zero if $k$ is odd and is
$\sigma^k$ times the $(k/2)^\text{th}$ Catalan number if $k$ is even. 
The $r^\text{th}$ Catalan number $\frac{1}{r+1}\,\binom{2r}{r}$ counts,
among other things, the number of
non--negative simple paths of length $2r$ that start and end at $0$.

On the
other hand, from \eqref{eq:moms}, we know that for every $k\in\mathds{Z}_+$, the
$k^\text{th}$ moment of the ESD $\mu_{\sqrt{n}K}$ writes
$$
\int_\mathds{R}\!x^k\,\mu_{\sqrt{n}K}(dx) %
= \frac1n \,{\rm Tr}\left((\sqrt{n}K)^k\right) %
= n^{-1+\frac{k}2}\sum_{i=1}^n r^\mathbf{U}_k(i)\,.
$$
Additionally, from \eqref{eq:lamax} we get
$$
\int_\mathds{R}\!x^k\,\mu_{\sqrt{n}K}(dx) %
=
n^{-1+\frac{k}{2}} %
+ \left(1-\frac{1}{n}\right)\int_\mathds{R}\!x^k\,\wt\mu_{\sqrt{n}K}(dx)
$$
where $\wt\mu_{\sqrt{n}K}$ is the trimmed ESD defined earlier. We can then
state the following.

\begin{corollary}[Return probabilities]\label{co:retpro}
  Let $r^\mathbf{U}_k(i)$ be the probability that the random walk on $V$ with
  kernel $K$ started at $i$ returns to $i$ after $k$ steps. If $\mathcal{L}$
  has variance $0<\sigma^2<\infty$ and finite fourth moment
  then almost surely, for every $k\in\mathds{Z}_+$,
  \begin{equation}\label{moments}
    \lim_{n\to\infty} n^{-1+\frac{k}2}\left(\sum_{i=1}^nr^\mathbf{U}_k(i)-1\right) %
    = %
    \begin{cases}
      0 & \text{if $k$ is odd}\\
      \frac{\sigma^{k}}{k/2+1}\binom{k}{k/2} & \text{if $k$ is even}.
    \end{cases}
  \end{equation} 
\end{corollary}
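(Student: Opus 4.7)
The plan is to read off Corollary \ref{co:retpro} directly from the two moment identities displayed just before its statement, combined with the strong $W_p$ convergence given by Corollary \ref{co:wignerstrong}.

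First, I would subtract the two displayed identities. The identity
$$
\int_{\mathds{R}} x^k\,\mu_{\sqrt{n}K}(dx) = n^{-1+k/2}\sum_{i=1}^n r^{\mathbf{U}}_k(i)
$$
combined with
$$
\int_{\mathds{R}} x^k\,\mu_{\sqrt{n}K}(dx) = n^{-1+k/2} + \left(1-\tfrac{1}{n}\right)\int_{\mathds{R}} x^k\,\wt\mu_{\sqrt{n}K}(dx)
$$
yields, after rearranging,
$$
n^{-1+k/2}\Bigl(\sum_{i=1}^n r^{\mathbf{U}}_k(i) - 1\Bigr) = \left(1-\tfrac{1}{n}\right)\int_{\mathds{R}} x^k\,\wt\mu_{\sqrt{n}K}(dx).
$$
So the corollary reduces to showing that the $k$-th moment of the trimmed ESD $\wt\mu_{\sqrt{n}K}$ converges almost surely to the $k$-th moment of the semi-circle law $\mathcal{W}_{2\sigma}$.

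Next I would invoke Corollary \ref{co:wignerstrong}, which says that under the hypothesis of finite fourth moment, $W_p(\wt\mu_{\sqrt{n}K},\mathcal{W}_{2\sigma}) \to 0$ almost surely for every $p\geq 1$. Since convergence in Wasserstein $W_p$ distance is equivalent to weak convergence together with convergence of moments up to order $p$ (as recalled in the paper, citing \cite{MR1964483}), taking $p=k$ gives
$$
\lim_{n\to\infty}\int_{\mathds{R}} x^k\,\wt\mu_{\sqrt{n}K}(dx) = \int_{\mathds{R}} x^k\,\mathcal{W}_{2\sigma}(dx)
$$
almost surely. Finally I would substitute the explicit values of the semi-circle moments: the $k$-th moment of $\mathcal{W}_{2\sigma}$ vanishes for odd $k$ and equals $\sigma^k$ times the $(k/2)$-th Catalan number $\frac{1}{k/2+1}\binom{k}{k/2}$ for even $k$. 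Since $1-\tfrac{1}{n}\to 1$, this gives the announced limit.

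There is no real obstacle here: the whole content is already packaged into Corollary \ref{co:wignerstrong}, and the corollary is essentially a translation of that statement through the trace formula \eqref{eq:moms} and the isolated top eigenvalue \eqref{eq:lamax}. The only point worth emphasizing is that one must use the \emph{trimmed} ESD $\wt\mu_{\sqrt{n}K}$ rather than $\mu_{\sqrt{n}K}$, since for $k\geq 2$ the contribution $n^{-1+k/2}$ coming from the Perron eigenvalue $\lambda_1(\sqrt nK)=\sqrt n$ does not vanish and is precisely what is subtracted off on the left-hand side of \eqref{moments}.
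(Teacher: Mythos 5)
Your argument is correct and is exactly the one the paper intends: the corollary is stated immediately after the two displayed moment identities precisely so that it follows by subtracting them and invoking the a.s.\ convergence of the moments of the trimmed ESD $\wt\mu_{\sqrt{n}K}$ from Corollary \ref{co:wignerstrong}, together with the stated values of the semi--circle moments. Your emphasis on trimming off the Perron contribution $n^{-1+k/2}$ is also the right point to flag; nothing is missing.
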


We end  our analysis of the complete graph model with the behavior of the
invariant probability distribution $\hat{\rho}$ of $K$, obtained by
normalizing the invariant vector $\rho$ as
$$
\hat{\rho}=(\rho_1+\cdots+\rho_n)^{-1}(\rho_1\delta_1+\cdots+\rho_n\delta_n).
$$
Let $\mathcal{U}=n^{-1}(\delta_1+\cdots+\delta_n)$ denote the uniform law on
$\{1,\ldots,n\}$. As usual, the \emph{total variation distance}
$\|\mu-\nu\|_\textsc{tv}$ between two probability measures
$\mu=\sum_{k=1}^n\mu_k\delta_k$ and $\nu=\sum_{k=1}^n\nu_k\delta_k$ on
$\{1,\ldots,n\}$ is given by
$$
\|\mu-\nu\|_\textsc{tv}=\frac{1}{2}\sum_{k=1}^n|\mu_k-\nu_k|.
$$

\begin{proposition}[Invariant probability measure]\label{th:wignerinv}
  If $\mathcal{L}$ has finite second moment, then a.s.
  \begin{equation}\label{tvar}
    \lim_{n\to\infty}\|\hat\rho - \mathcal{U}\|_{\textsc{tv}}=0.
  \end{equation}
\end{proposition}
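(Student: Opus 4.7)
The plan is to reduce the total--variation distance to two almost--sure limits involving the weights $\rho_i$. Setting $S=\sum_j\rho_j$ and $\bar\rho=S/n$, one checks that $\hat\rho_i-1/n=(\rho_i-\bar\rho)/S$, so by the triangle inequality
$$
\|\hat\rho-\mathcal{U}\|_{\textsc{tv}}
=\frac{1}{2S}\sum_i|\rho_i-\bar\rho|
\leq\frac{1}{2S}\sum_i|\rho_i-n|+\frac{|S-n^2|}{2S}.
$$
Since $S=2\sum_{i<j}U_{i,j}+\sum_i U_{i,i}$ is a sum of $\Theta(n^2)$ i.i.d.\ copies of variables of mean $1$, Kolmogorov's strong law gives $S/n^2\to 1$ a.s. The second term above therefore tends to $0$, and it suffices to prove the almost--sure statement
$$
\frac{1}{n^2}\sum_{i=1}^n|\rho_i-n|\longrightarrow 0.
$$

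To handle the dependence of the $\rho_i$ across rows (they share the edge variables $U_{i,j}=U_{j,i}$), I would truncate. Fix $M>0$ and write $U_{i,j}=U'_{i,j}+U''_{i,j}$ with $U'_{i,j}=U_{i,j}\wedge M\in[0,M]$ and $U''_{i,j}=(U_{i,j}-M)_+\geq 0$. Put $m'=\mathds{E}U'$, $m''=\mathds{E}U''$; then $m'+m''=1$ and $m''\downarrow 0$ as $M\to\infty$ by dominated convergence, since $\mathcal{L}$ is integrable. Decompose $\rho_i-n=(\rho'_i-nm')+(\rho''_i-nm'')$ and treat each piece separately.

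For the bounded piece, with $i$ fixed the $U'_{i,j}$ for $j=1,\dots,n$ are i.i.d.\ in $[0,M]$, so Hoeffding's inequality yields $\mathds{P}(|\rho'_i-nm'|>n\varepsilon)\leq 2\exp(-2n\varepsilon^2/M^2)$. A union bound over $i\leq n$ is summable in $n$, so Borel--Cantelli gives $\max_i|\rho'_i-nm'|/n\to 0$ a.s., hence $n^{-2}\sum_i|\rho'_i-nm'|\to 0$ a.s. For the unbounded piece, $\rho''_i,m''\geq 0$ gives $|\rho''_i-nm''|\leq\rho''_i+nm''$, so
$$
\frac{1}{n^2}\sum_i|\rho''_i-nm''|\leq\frac{1}{n^2}\sum_{i,j}U''_{i,j}+m'',
$$
and by Kolmogorov's SLLN applied to the $\binom{n}{2}$ i.i.d.\ off--diagonal variables $U''_{i,j}$, $i<j$ (plus the negligible diagonal), the right--hand side converges a.s.\ to $2m''$. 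Summing the two bounds, $\limsup_n n^{-2}\sum_i|\rho_i-n|\leq 2m''$ almost surely; applying this for $M=1,2,3,\dots$ on a single probability--one event and letting $M\to\infty$ yields the required limit.

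The main obstacle is the upgrade from the crude $L^1$ bound $\mathds{E}|\rho_i-n|=O(\sqrt n)$ to almost--sure control, summed over the $n$ correlated rows. The truncation at level $M$ is what decouples these two difficulties: the bounded part admits Hoeffding concentration strong enough for a Borel--Cantelli argument to give uniformity in $i$, while the unbounded part, controllable only in mean, contributes only $O(m'')$, which is made arbitrarily small by choosing $M$ large.
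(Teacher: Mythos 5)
Your proof is correct, and it takes a genuinely different route from the one in the paper. The paper's proof invokes the uniform strong law of large numbers (Lemma \ref{le:LLN}, itself a consequence of the Baum--Katz type Lemma \ref{le:LLN-gen}) to get the \emph{uniform} control $\max_{1\leq i\leq n}|\rho_i/n-1|=o(1)$ a.s., and this is precisely where the finite second moment hypothesis enters; the total variation bound is then immediate. You instead observe that total variation convergence only requires the \emph{averaged} control $n^{-2}\sum_i|\rho_i-n|\to0$, and you obtain this by a truncation at level $M$: Hoeffding plus Borel--Cantelli for the bounded part (the correlation between rows is harmless here, since the first Borel--Cantelli lemma needs no independence and Hoeffding is applied row by row, where the entries are genuinely i.i.d.), and a plain first-moment SLLN bound of order $2\mathds{E}(U-M)_+$ for the tail part, which vanishes as $M\to\infty$. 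The payoff of your argument is twofold: it is self-contained (no appeal to the Baum--Katz machinery), and it establishes the proposition under the weaker hypothesis that $\mathcal{L}$ has a finite \emph{first} moment, since nowhere do you use the second moment. What it does not give is the uniform estimate $\delta=\max_i|\epsilon_i|=o(1)$ of \eqref{eq:delta}, which the paper's route provides for free and which is needed elsewhere (in the proofs of Theorems \ref{th:wigner} and \ref{th:wigneredge}); so the paper's stronger hypothesis buys a stronger intermediate statement, even though it is not needed for \eqref{tvar} itself. The only points worth making fully explicit in a write-up are the countable intersection over $\varepsilon\downarrow0$ in the Borel--Cantelli step and the (standard) fact that the SLLN along the subsequence $N=\binom{n}{2}$ of partial sums is legitimate.
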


The proof of Proposition \ref{th:wignerinv}, given in Section \ref{se:wigner},
relies as before on a uniform law of large numbers. The speed of convergence
and fluctuation of $\|\hat\rho - \mathcal{U}\|_{\textsc{tv}}$ depends on the
tail of $\mathcal{L}$. The reader can find in Lemma \ref{le:LLN} of Section
\ref{se:wigner} some estimates in this direction.

\subsection*{Chain graph model (birth-and-death)}

The complete graph model discussed earlier provides a random reversible Markov
kernel which is irreducible and aperiodic. One of the key feature of this
model lies in the fact that the degree of each vertex is $n$, which goes to
infinity as $n\to\infty$. This property allows one 
to use a law of large numbers
to control the normalization $\rho_i$. The method will roughly still work if
we replace the complete graphs sequence by a sequence of graphs for which the
degrees are of order $n$. See e.g.\ \cite{MR2432537} for a survey of related
results in the context of random graphs. To go beyond this framework, it is
natural to consider \emph{local} models 
for which the degrees are uniformly bounded. We shall focus on a simple
birth-and-death Markov kernel $K = (K_{i,j})_{1\leq i,j\leq n}$ on
$\{1,\ldots,n\}$ given by
$$
K_{i , i+1} = b_i, \quad K_{i,i}=a_i, \quad K_{i,i-1} = c_i
$$ 
where $(a_i)_{1\leq i\leq n}$, $(b_i)_{1\leq i\leq n}$, $(c_i)_{1\leq i\leq
  n}$ are in $[0,1]$ with $c_1 = b_n =0$ , $b_i + a_i+ c_i = 1$ for every
$1\leq i\leq n$, and $c_{i+1}>0$ and $b_i > 0$ for every $1 \leq i \leq n-1$.
In other words, we have
\begin{equation}\label{eq:chainKabc}
K=
\begin{pmatrix}
  a_1 & b_1 \\
  c_2 & a_2 & b_2 \\
  & c_3 & a_3 & b_3 \\
  && \ddots & \ddots & \ddots \\
  &&& c_{n-1} & a_{n-1} & b_{n-1} \\
  &&&&   c_n & a_n
\end{pmatrix}.
\end{equation}
The kernel $K$ is irreducible, reversible, and every vertex has degree $\leq
3$. For an arbitrary $\rho_1>0$, the measure $\rho = \rho_1\delta
_1+\cdots+\rho_n\delta_n$ defined for every $2\leq i\leq n$ by
$$
\rho_i = \rho_1\prod_{k = 1 } ^{i-1} \frac{b_k}{c_{k+1}} %
=\rho_1\frac{b_1\cdots b_{i-1}}{c_2\cdots c_i} 
$$
is invariant and reversible for $K$, 
i.e.\ for $1 \leq i,j
\leq n$,
$
\rho_i K_{i,j} = \rho_j K_{j,i}.
$
For every $1\leq i\leq n$, the $i^\text{th}$ row $(c_i,a_i,b_i)$ of $K$
belongs to the $3$-dimensional simplex
$$
\Lambda_3=\{v\in[0,1]^3;v_1+v_2+v_3=1\}.
$$
For every $v\in\Lambda_3$, we define the left and right ``reflections''  
$v_-\in\Lambda_3$ and $v_+\in\Lambda_3$ of $v$ by
$$
v_-=(v_1+v_3,v_2,0)\quad\text{and}\quad v_+=(0,v_2,v_1+v_3).
$$
The following result provides a general answer for the behavior of the bulk.

\begin{theorem}[Global behavior for ergodic environment]\label{th:chainerg}
  Let $\mathbf{p}:\mathds{Z}\to\Lambda_3$ be an ergodic random field. Let $K$
  be the random birth-and-death kernel \eqref{eq:chainKabc} on
  $\{1,\ldots,n\}$ obtained from $\mathbf{p}$ by taking for every $1\leq i\leq
  n$
  $$
  (c_i,a_i,b_i)
  =
  \begin{cases}
    \mathbf{p}(i) & \text{if $2\leq i\leq n-1$} \\
    \mathbf{p}(1)_+ & \text{if $i=1$} \\
    \mathbf{p}(n)_- & \text{if $i=n$}.
  \end{cases}
  $$
  Then there exists a non-random probability measure $\mu$ on $[-1,+1]$ such
  that almost surely,
  $$
  \lim_{n\to\infty} W_p(\mu_K,\mu)=0
  $$
  for every $p\geq1$, where $W_p$ is the Wasserstein distance \eqref{eq:Wp}.
  Moreover, for every $\ell\geq0$,
  $$
  \int_{-1}^{+1}\!x^\ell\,\mu(dx)=\mathds{E}[r^\mathbf{p}_\ell(0)]
  $$
  where $r^\mathbf{p}_\ell(0)$ is the probability of return to $0$ in $\ell$
  steps for the random walk on $\mathds{Z}$ with random environment
  $\mathbf{p}$. The expectation is taken with respect to the environment
  $\mathbf{p}$.
\end{theorem}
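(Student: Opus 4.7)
The plan is to use the method of moments. Since $K$ is a Markov kernel, each $\mu_K$ is supported in the compact interval $[-1,+1]$, so weak convergence is equivalent to convergence of every moment, which in turn is equivalent to convergence in $W_p$ for every $p\geq 1$ (all moments are uniformly bounded by $1$). It therefore suffices to exhibit an almost sure deterministic limit for every moment of $\mu_K$, and to identify that limiting sequence as the moment sequence of a unique probability measure on $[-1,+1]$.

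By \eqref{eq:moms}, the $\ell$-th moment of $\mu_K$ is $\frac{1}{n}\sum_{i=1}^n r^{\mathbf{U}}_\ell(i)$. The key structural observation is the \emph{locality} of a birth-and-death chain: any trajectory $(i_0=i,i_1,\ldots,i_\ell=i)$ returning to $i$ in $\ell$ steps stays within $\{i-\lfloor\ell/2\rfloor,\ldots,i+\lfloor\ell/2\rfloor\}$ and uses only the triples $\mathbf{p}(j)$ for $j$ in this window. Consequently, for every bulk index $\lfloor\ell/2\rfloor+2 \leq i \leq n-\lfloor\ell/2\rfloor-1$, the return probability $r^{\mathbf{U}}_\ell(i)$ does not feel the boundary reflections $\mathbf{p}(1)_+$ and $\mathbf{p}(n)_-$, and equals $F_\ell(\tau_i\mathbf{p})$, where $\tau$ denotes the shift on $\Lambda_3^{\mathds{Z}}$ and $F_\ell$ is a $[0,1]$-valued measurable function on the environment space. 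By shift-invariance in law, $\mathds{E}[F_\ell(\mathbf{p})]=\mathds{E}[r^{\mathbf{p}}_\ell(0)]$.

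Splitting the trace into bulk and boundary contributions, the boundary involves only $O(\ell)$ indices each bounded by $1$, hence contributes $O(\ell/n)\to 0$. On the bulk, Birkhoff's ergodic theorem applied to the ergodic field $\mathbf{p}$ and the bounded observable $F_\ell$ yields
$$
\frac{1}{n}\sum_{i}F_\ell(\tau_i\mathbf{p}) \;\underset{n\to\infty}{\longrightarrow}\; \mathds{E}[r^{\mathbf{p}}_\ell(0)] \qquad \text{a.s.}
$$
Intersecting the associated null sets over the countable family $\ell\in\mathds{Z}_+$ produces a single almost sure event on which every moment of $\mu_K$ converges to $m_\ell := \mathds{E}[r^{\mathbf{p}}_\ell(0)]$.

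To conclude, the sequence $(m_\ell)_{\ell\geq 0}$, being a limit of moment sequences of probability measures on $[-1,+1]$, is itself the moment sequence of some probability measure $\mu$ on $[-1,+1]$; by determinacy of the Hausdorff moment problem on a compact interval, $\mu$ is unique. Since $(\mu_K)$ is automatically tight and every subsequential weak limit must share the moments $(m_\ell)$ by bounded convergence, we obtain $\mu_K\to\mu$ weakly almost surely, and this upgrades to $W_p$-convergence for every $p\geq 1$ by the common compact support. The only delicate point is really the locality reduction at the endpoints of $\{1,\ldots,n\}$, and that is controlled trivially by the $O(\ell/n)$ bound; everything else is the combination of the trace formula, ergodicity of the shift, and the classical moment method.
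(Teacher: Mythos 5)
Your proof is correct and follows essentially the same route as the paper's: reduce to convergence of moments via the common compact support $[-1,+1]$, use the trace formula and the locality of the birth-and-death walk to replace the finite-volume return probabilities by their infinite-volume counterparts away from an $O(\ell)$ boundary layer, and conclude by the ergodic theorem. The paper's argument is just a terser version of yours, so there is nothing further to add.
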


The proof of Theorem \ref{th:chainerg}, given in Section \ref{se:chain}, is a
simple consequence of the ergodic theorem; see also \cite{MR1956471} for an
earlier application to random conductance models. The reflective boundary
condition is not necessary for this result on the bulk of the spectrum, and
essentially any boundary condition (e.g.\ Dirichlet or periodic) produces the
same limiting law, with essentially the same proof. Moreover, this result is
not limited to the one--dimensional random walks and it remains valid e.g.\
for any \emph{finite range reversible random walk with ergodic random
  environment on $\mathds{Z}^d$}. However, as we shall see below, a more
precise analysis is possible for certain type of environments when $d=1$.

Consider the chain graph $G=(V,E)$ with $V=\{1,\ldots,n\}$ and
$E=\{(i,j);|i-j|\leq 1\}$.
%
A
random conductance model on 
this
graph can be obtained by defining $K$ with \eqref{eq:srw} by putting i.i.d.\
positive weights $\mathbf{U}$ of law $\mathcal{L}$ on the
edges. 
For instance, if we remove the loops, 
this corresponds to define $K$ by \eqref{eq:chainKabc} with
$a_1=\cdots=a_n=0$, $b_1 = c_n = 1$, and, for every $2 \leq i \leq n-1$,
$$
b_i = 1 - c_i = V_i = \frac{U_{i,i+1}}{U_{i,i+1} + U_{i,i-1}}.
$$
where $(U_{i,i+1})_{i\geq1}$ are i.i.d.\  random variables of law $\mathcal{L}$
supported in $(0,\infty)$. The random variables $V_1,\ldots,V_n$ are dependent
here. 

Let us consider now an alternative simple way to make $K$ random. Namely, we
use a sequence $(V_i)_{i\geq1}$ of i.i.d.\ random variables on $[0,1]$ with
common law $\mathcal{L}$ and define the random birth-and-death Markov
kernel $K$ by \eqref{eq:chainKabc} with
$$
b_1=c_n=1 \quad\text{and}\quad
b_i=1-c_i=V_i
\quad\text{for every $2\leq i\leq n-1$.}
$$
In other words, the random Markov kernel $K$ is of the form
\begin{equation}\label{eq:chainK}
K=
\begin{pmatrix}
  0 & 1 \\
  1-V_2 & 0 & V_2 \\
  & 1-V_3 & 0 & V_3 \\
  && \ddots & \ddots & \ddots \\
  &&& 1-V_{n-1} & 0 & V_{n-1} \\
  &&&&   1 & 0
\end{pmatrix}.
\end{equation}
This is not a random conductance model. However, the kernel is a particular
case of the one appearing in Theorem \ref{th:chainerg}, corresponding to the
i.i.d.\ environment given by
$$
\mathbf{p}(i)=(1-V_i,0,V_i)
$$
for every $i\geq1$. This 
gives the following corollary of Theorem
\ref{th:chainerg}.

\begin{corollary}[Global behavior for i.i.d.\  environment]\label{co:chain}
  Let $K$ be the random birth-and-death Markov kernel \eqref{eq:chainK} where
  $(V_i)_{i\geq2}$ are i.i.d.\  of law $\mathcal{L}$ on $[0,1]$.
  Then there exists a non-random probability distribution $\mu$ on $[-1,+1]$
  such that almost surely,
  $$
  \lim_{n\to\infty} W_p(\mu_K,\mu)=0
  $$
  for every $p\geq1$, where $W_p$ is the Wasserstein distance as in
  \eqref{eq:Wp}. The limiting spectral distribution $\mu$ is fully
  characterized by its sequence of moments, given for every $k \geq 1$ by
  $$
  \int_{-1}^{+1}\!x^{2k-1}\,\mu(dx) = 0 %
  \quad\text{ and }\quad %
  \int_{-1}^{+1}\!x^{2k}\,\mu(dx) %
  = \sum_{\gamma \in D_k} %
  \prod_{i\in\mathds{Z}} %
  \mathds{E} \left(V ^{N_\gamma(i)}(1-V)^{N_\gamma(i-1)}\right)
  $$
  where $V$ is a random variable of law $\mathcal{L}$ and where
  $$
  D_k = \{ \gamma = (\gamma_0, \ldots,\gamma_{2k}) : \gamma_0 = \gamma_{2k} =
  0, \text{ and } |\gamma_\ell - \gamma_{\ell +1} | = 1 \text{ for every $0
    \leq \ell \leq 2k -1$}\}
  $$
  is the set of loop paths of length $2k$ of the simple random walk on
  $\mathds{Z}$, and
  $$
  N_\gamma (i) %
  = \sum_{\ell = 0}^{2 k-1} \mathds{1}_{\{(\gamma_\ell, \gamma_{\ell+1}) =
    (i,i+1)\}}
  $$
  is the number of times $\gamma$ crosses the horizontal line $y = i
  +\frac{1}{2}$ in the increasing direction.
\end{corollary}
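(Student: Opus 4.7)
The plan is to reduce Corollary \ref{co:chain} to Theorem \ref{th:chainerg} and then perform an explicit computation of the limiting moments. First I would verify that the kernel \eqref{eq:chainK} coincides with the one produced by Theorem \ref{th:chainerg} when applied to the i.i.d.\ environment $\mathbf{p}(i) = (1-V_i, 0, V_i)$, $i\geq 1$. Indeed, an i.i.d.\ field is trivially ergodic, and the reflection rule yields $\mathbf{p}(1)_+ = (0,0,1)$ and $\mathbf{p}(n)_- = (1,0,0)$, matching the boundary rows $b_1 = c_n = 1$ in \eqref{eq:chainK}. Theorem \ref{th:chainerg} then delivers, almost surely, $W_p(\mu_K,\mu) \to 0$ for every $p\geq 1$, with a non-random limit $\mu$ whose moments are
$$
\int_{-1}^{+1} x^\ell\, \mu(dx) = \mathds{E}\bigl[r_\ell^{\mathbf{p}}(0)\bigr].
$$
Since the chain has no loops ($a_i \equiv 0$), the walk on $\mathds{Z}$ has period $2$, so $r_{2k-1}^{\mathbf{p}}(0) = 0$, which gives the odd-moment claim for free.

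For the even moments I would expand the quenched return probability as a sum over trajectories of length $2k$ starting and ending at $0$: conditionally on the environment,
$$
r_{2k}^{\mathbf{p}}(0) \;=\; \sum_{\gamma \in D_k} \prod_{\ell=0}^{2k-1} \pi_\gamma(\ell),
$$
where $\pi_\gamma(\ell) = V_{\gamma_\ell}$ when $\gamma_{\ell+1} = \gamma_\ell + 1$ and $\pi_\gamma(\ell) = 1 - V_{\gamma_\ell}$ when $\gamma_{\ell+1} = \gamma_\ell - 1$. The key combinatorial step, and essentially the only non-automatic one, is to regroup this product site by site. At each $i \in \mathds{Z}$, the number of rightward steps $i \to i{+}1$ is $N_\gamma(i)$ by definition, contributing $V_i^{N_\gamma(i)}$. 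The number of leftward steps $i \to i{-}1$ equals the number of \emph{downward} crossings of the level $y = i - \tfrac{1}{2}$; since $\gamma$ is a loop path on $\mathds{Z}$, upward and downward crossings of any horizontal level are equal, so this count is $N_\gamma(i-1)$. Therefore
$$
r_{2k}^{\mathbf{p}}(0) \;=\; \sum_{\gamma \in D_k} \prod_{i \in \mathds{Z}} V_i^{N_\gamma(i)} (1-V_i)^{N_\gamma(i-1)}.
$$

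Finally, I would take the expectation over the environment. Since $\gamma$ is bounded, only finitely many factors in the product differ from $1$, and the random variables $V_i$ are i.i.d., so the expectation of the product splits into the product of expectations, yielding
$$
\mathds{E}\bigl[r_{2k}^{\mathbf{p}}(0)\bigr] \;=\; \sum_{\gamma \in D_k} \prod_{i \in \mathds{Z}} \mathds{E}\bigl[V^{N_\gamma(i)} (1-V)^{N_\gamma(i-1)}\bigr],
$$
which is exactly the formula in the statement. I do not foresee a real obstacle: the argument is a direct application of Theorem \ref{th:chainerg} together with the transparent bookkeeping of edge crossings on $\mathds{Z}$ via the loop-path balance identity.
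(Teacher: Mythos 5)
Your proposal is correct and takes the same route as the paper: the convergence is obtained by applying Theorem \ref{th:chainerg} to the i.i.d.\ environment $\mathbf{p}(i)=(1-V_i,0,V_i)$, and the moment formula follows from the path-counting and crossing-balance bookkeeping you carry out (which the paper dismisses as ``a straightforward path--counting argument''). Your identification of leftward steps from $i$ with downward crossings of $y=i-\tfrac12$, balanced against upward crossings for a loop path, is exactly the detail the paper omits, and it is correct.
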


When the random variables $(V_i)_{i\geq2}$ are only stationary and ergodic,
Corollary \ref{co:chain} remains valid provided that we adapt the
formula for the even moments of $\mu$ (that is, move the
product inside the expectation).

\begin{remark}[From Dirac masses to arc--sine laws]\label{re:arcsine}
  Corollary \ref{co:chain} gives a formula for the moments of $\mu$. This
  formula is a series involving the ``Beta-moments'' of $\mathcal{L}$. We
  cannot compute it explicitly for arbitrary laws $\mathcal{L}$ on $[0,1]$.
  However, in the deterministic case $\mathcal{L} = \delta_{1/2}$, we have,
  for every integer $k\geq1$,
  $$
  \int_{-1}^{+1}\! x^{2k} \mu (dx) %
  = \sum_{\gamma \in D_k} %
  2 ^{ - \sum_i  N_\gamma (i)- \sum_i  N_\gamma (i-1)} %
  =  2^{-2k} { \binom{2k}{k}} %
  = \int_{-1}^{+1}\!\! x^{2k} \frac{dx}{ \pi \sqrt{1 - x^2} }
  $$
  which confirms the known fact that $\mu$ is the arc--sine law on $[-1,+1]$
  in this case (see e.g.\ \cite[III.4 page 80]{MR0228020}). More generally, a
  very similar computation reveals that if $\mathcal{L}=\delta_p$ with $0<p<1$
  then $\mathcal{\mu}$ is the arc--sine law on
  $[-2\sqrt{p(1-p)}\,,\,+2\sqrt{p(1-p)}]$. Figures
  \ref{fi:tridiaga}-\ref{fi:tridiagb}-\ref{fi:tridiagc} display simulations
  illustrating Corollary \ref{co:chain} for various other choices of
  $\mathcal{L}$. 
\end{remark}

\begin{remark}[Non--universality] The law $\mu$ in Corollary \ref{co:chain} is
  not universal, in the sense that it depends on many ``Beta-moments'' of
  $\mathcal{L}$, in contrast with the complete graph case where the limiting
  spectral distribution depends on $\mathcal{L}$ only via its first two
  moments.
\end{remark}

We now turn to the edge behavior of $\sigma(K)$ where $K$ is as in
\eqref{eq:chainK}. Since $K$ has period $2$, one has $\lambda_n(K)=-1$ and we
are interested in the behavior of $\lambda_2(K)=-\lambda_{n-1}(K)$ as $n$ goes
to infinity. Since the limiting spectral distribution $\mu$ is symmetric, the
convex hull of its support is of the form $[-\alpha_\mu ,+\alpha_\mu ]$ for
some $0 \leq \alpha_\mu \leq 1$. The following result gives 
information on $\alpha_\mu$. The reader may forge many conjectures in the same
spirit for the map $\mathcal{L}\mapsto \mu$ from the simulations given by
Figures \ref{fi:tridiaga}-\ref{fi:tridiagb}-\ref{fi:tridiagc}.

\begin{theorem}[Edge behavior for i.i.d.\ environment] \label{th:chainedge} Let
  $K$ be the random birth-and-death Markov kernel \eqref{eq:chainK} where
  $(V_i)_{i\geq2}$ are i.i.d.\ of law $\mathcal{L}$ on $[0,1]$. Let $\mu$ be
  the symmetric limiting spectral distribution on $[-1,+1]$ which appears in
  Corollary \ref{co:chain}. Let $[-\alpha_\mu,+\alpha_\mu]$ be the convex hull
  of the support of $\mu$. If $\mathcal{L}$ has a positive density at $1/2$
  then $\alpha_\mu = 1$. Consequently, almost surely,
  $$
  \lambda_2(K)=-\lambda_{n-1}(K)=1+o(1).
  $$  
  On the other hand, 
  if $\mathcal{L}$ is supported on $[0,t]$ with $0<t<1/2$ or on
  $[t,1]$ with $1/2<t<1$ then almost surely
  $\limsup_{n\to\infty}\lambda_2(K)<1$ and therefore $\alpha_\mu<1$.
\end{theorem}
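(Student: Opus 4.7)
The two parts rely on very different mechanisms, and I would treat them separately while working throughout with the symmetrized tridiagonal matrix $S$ defined by $S_{i,i+1}=\sqrt{b_ic_{i+1}}$, which is isospectral to $K$ and whose Perron eigenvector $\psi_1\propto\sqrt{\rho}$ is attached to the eigenvalue $+1$.

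For the density case, the plan is to exhibit, almost surely for every $\varepsilon>0$ and every sufficiently large $n$, two disjoint blocks $I_1,I_2\subset\{2,\dots,n-1\}$ of a prescribed length $L$, separated by at least one index, such that $V_i\in(\tfrac12-\delta,\tfrac12+\delta)$ for every relevant $i$. Such pairs of good blocks exist with probability one for $n$ large, by a Borel--Cantelli argument: the hypothesis of positive density at $1/2$ guarantees $\mathds{P}(V\in(\tfrac12-\delta,\tfrac12+\delta))>0$, so a given length-$L$ block is good with probability bounded below, and the expected number of good disjoint candidates grows linearly in $n$. On each good block the principal submatrix $S|_{I_j}$ lies within operator norm $O(\delta)$ of the constant tridiagonal matrix with off-diagonals $1/2$, whose top eigenvalue is $\cos(\pi/(L+1))$. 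Choosing $L$ large and $\delta$ small, the top eigenvalue of $S|_{I_j}$ exceeds $1-\varepsilon/2$, so there is a unit vector $v_j$ supported on $I_j$ with $\langle v_j,Sv_j\rangle\geq 1-\varepsilon/2$. Since $I_1$ and $I_2$ are at distance at least two and $S$ is tridiagonal, $\langle v_1,v_2\rangle=0$ and $\langle v_1,Sv_2\rangle=0$, so the linear combination $f=\alpha v_1+\beta v_2$ with $(\alpha,\beta)\neq 0$ chosen from the single scalar equation $\langle f,\psi_1\rangle=0$ satisfies $f\perp\psi_1$ while retaining Rayleigh quotient at least $1-\varepsilon/2$. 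Courant--Fischer then yields $\lambda_2(K)=\lambda_2(S)\geq 1-\varepsilon/2$, and $\varepsilon\to 0$ gives $\lambda_2(K)\to 1$ almost surely.

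For the support case, the label reversal $i\mapsto n+1-i$ converts the chain with $(V_i)$ into one with $(1-V_{n+1-i})$, preserving the spectrum, so it is enough to treat $\mathcal{L}\subset[0,t]$ with $0<t<1/2$. The plan is to bound the Cheeger (conductance) constant $\Phi(K)$ uniformly from below. For every $i\geq 2$ one has $\rho_{i+1}/\rho_i=V_i/(1-V_{i+1})\leq r:=t/(1-t)<1$, and summing the resulting geometric series gives $\pi(\{k+1,\dots,n\})\leq \pi_{k+1}/(1-r)$ for every $k\in\{1,\dots,n-1\}$. The flow across the cut $(k,k+1)$ satisfies $\pi_k K_{k,k+1}=\pi_{k+1}c_{k+1}\geq (1-t)\pi_{k+1}$ by reversibility (using $c_n=1$ at the right boundary), whence
\[
\frac{\pi_k K_{k,k+1}}{\min(\pi(\{1,\dots,k\}),\pi(\{k+1,\dots,n\}))}\geq (1-t)(1-r)=1-2t
\]
uniformly in $k$, $n$, and the realization of $(V_i)$. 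Minimising over $k$ gives $\Phi(K)\geq 1-2t>0$, and the standard Cheeger inequality $1-\lambda_2(K)\geq \Phi(K)^2/2$ for reversible chains, which bounds the second largest eigenvalue alone and is insensitive to the bipartite eigenvalue $\lambda_n(K)=-1$, produces
\[
\limsup_{n\to\infty}\lambda_2(K)\leq 1-\tfrac12(1-2t)^2<1
\]
almost surely, and thus $\alpha_\mu<1$.

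The main technical obstacle is packaging the test function in the density case: a single-block test vector generally cannot be rendered orthogonal to $\psi_1=\sqrt{\rho}/\|\sqrt{\rho}\|$ without sacrificing a definite share of the Rayleigh quotient, because $\sqrt{\rho}$ is a realization-dependent random profile with no deterministic concentration pattern. Using two separated blocks trades one real linear constraint for two real degrees of freedom and sidesteps this issue cleanly. The support case reduces, once the geometric decay of $\rho$ is observed, to the elementary flow/mass comparison above and is entirely routine.
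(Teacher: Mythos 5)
Your argument is essentially correct but follows a genuinely different route from the paper on both halves. For the positive-density case, the paper never touches eigenvectors: it works directly with the moment formula of Corollary \ref{co:chain}, lower-bounding $\int x^{2k}\mu(dx)$ by restricting the sum over loop paths to those with $\|\gamma\|_\infty\leq k^\alpha$ and invoking the Brownian-bridge version of Donsker's theorem to show these paths carry almost all of $|D_k|$; this yields $\int x^{2k}\mu(dx)\geq a^{2k}$ for every $a<1$, hence $\alpha_\mu=1$, and the eigenvalue statement is then read off from weak convergence. Your localization argument (nearly homogeneous blocks, comparison with the constant tridiagonal matrix, two separated blocks to buy orthogonality to $\sqrt{\rho}$, Courant--Fischer) is a clean variational alternative and is arguably more robust, since it does not need the explicit combinatorial moment formula. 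For the second statement, the paper uses the weighted Hardy-type criterion of Miclo (the quantities $B_k^{\pm}$) together with the same geometric decay $\rho_{\ell}\leq\rho_j e^{-\gamma(\ell-j)}$ that you exploit; your Cheeger-inequality version with the conductance bound $\Phi\geq 1-2t$ is equally valid (the reduction of the conductance of a birth-and-death chain to prefix cuts is standard, and $1-\lambda_2\geq\Phi^2/2$ holds for reversible chains irrespective of the period-$2$ eigenvalue $-1$), and your label-reversal reduction matches the paper's ``by symmetry''.

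One point you must repair: the theorem's primary assertion in the first case is $\alpha_\mu=1$, a statement about the support of the \emph{limiting} measure, and $\lambda_2(K)\to1$ alone does not imply it (a vanishing fraction of outlying eigenvalues near $1$ would not register in $\mu$). Your construction does deliver it, but only after you use \emph{all} the good blocks rather than two: since the number of disjoint good blocks grows linearly in $n$ (as you note), you obtain $m\sim cn$ orthonormal test vectors $v_1,\dots,v_m$ with pairwise disjoint, separated supports, so that $\langle v_i,Sv_j\rangle=0$ for $i\neq j$ and the Rayleigh quotient is at least $1-\varepsilon$ on their span; Courant--Fischer then gives $\lambda_{\lfloor cn\rfloor}(S)\geq1-\varepsilon$, hence $\mu([1-\varepsilon,1])\geq c>0$ and $\alpha_\mu\geq1-\varepsilon$ for every $\varepsilon>0$. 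With that one-line addition the proof is complete.
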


The proof of Theorem \ref{th:chainedge} is given in Section \ref{se:chain}.
The speed of convergence of $\lambda_2(K)-1$ to $0$ is highly dependent on the
choice of the law $\mathcal{L}$. As an example, if e.g.\
$$
\mathds{E}\left[\log\frac{V}{1-V}\right]=0 %
\quad\text{and}\quad %
\mathds{E}\left[\left(\log\frac{V}{1-V}\right)^2\right]>0
$$
where $V$ has law $\mathcal{L}$, then $K$ is the so called Sinai random walk on
$\{1,\ldots,n\}$. In this case, by a slight
modification of the analysis of \cite{MR2370603}, one can prove that almost
surely,
$$
-\infty<\liminf_{n\to\infty}\frac{1}{\sqrt{n}}\log(1-\lambda_2(K)) %
\leq \limsup_{n\to\infty}\frac{1}{\sqrt{n}}\log(1-\lambda_2(K))<0.
$$
Thus, the convergence to the edge here occurs exponentially fast in
$\sqrt{n}$. On the other hand, if for instance $\mathcal{L}=\delta_{1/2}$
(simple reflected random walk on $\{1,\ldots,n\}$) then it is known that
$1-\lambda_2(K)$ decays as $n^{-2}$ only. 

\medskip\bigskip

We conclude with a list of remarks and open problems.

\medskip

\noindent {\bf Fluctuations at the edge}. An interesting problem concerns the
fluctuations of $\lambda_2(\sqrt{n}K)$ around its limiting value $2\sigma$ in
the complete graph model. Under suitable moments conditions on $\mathcal{L}$,
one may seek for a deterministic sequence $(a_n)$, and a probability
distribution $\mathcal{D}$ on $\mathds{R}$ such that
\begin{equation}\label{fluttua}
a_n\left(
  \lambda_2(\sqrt{n}K)-2\sigma\right)
\overset{\text{d}}{\underset{n\to\infty}{\longrightarrow}}\mathcal{D}
\end{equation}
where ``$\overset{d}{\to}$'' stands for convergence in distribution. The same
may be asked for the random variable $\lambda_n(\sqrt{n}K) + 2\sigma$.
Computer simulations suggest that $a_n\approx n^{2/3}$ and that $\mathcal{D}$
is close to a Tracy-Widom distribution. The heuristics here is that
$\lambda_2(\sqrt{n}K)$ behaves like the $\lambda_1$ of a centered Gaussian
random symmetric matrix. The difficulty is that the entries of $K$ are not
i.i.d., not centered, and of course not Gaussian.

\medskip

\noindent {\bf Symmetric Markov generators}. Rather than considering the
random walk with infinitesimal generator $K-I$ on the complete graph as we
did, one may start with the \emph{symmetric} infinitesimal generator $G$
defined by $G_{i,j} = G_{j,i} = U_{i,j}$ for every $1\leq i<j\leq n$ and
$G_{i,i}=-\sum_{j\neq i}G_{i,j}$ for every $1\leq i\leq n$. Here
$(U_{i,j})_{1\leq i<j}$ is a triangular array of i.i.d.\ real random variables
of law $\mathcal{L}$. For this model, the uniform probability measure
$\mathcal{U}$ is reversible and invariant. The bulk behavior of such random
matrices has been investigated in \cite{MR2206341}.

\medskip

\noindent {\bf Non--reversible Markov ensembles}. A non--reversible model is
obtained when the underlying complete graph is oriented. That is each vertex
$i$ has now (besides the loop) $n-1$ outgoing edges $(i,j)$ and $n-1$ incoming
edges $(j,i)$. On each of these edges we place an independent positive weight
$V_{i,j}$ with law $\mathcal{L}$, and on each loop an independent positive
weight $V_{i,i}$ with law $\mathcal{L}$. This gives us a non--reversible
stochastic matrix
$$
\wt K_{i,j} = \frac{V_{i,j}}{\sum_{k=1}^n V_{i,k}}\,.
$$
The spectrum of $\wt K$ is now complex. If $\mathcal{L}$ is exponential, then
the matrix $\wt K$ describes the Dirichlet Markov Ensemble considered in
\cite{chafai-dme}. Numerical simulations suggest that if $\mathcal{L}$ has,
say, finite positive variance, then the ESD of $n^{1/2}\wt K$ converges weakly
as $n\to\infty$ to the uniform law on the unit disc of the complex plane
(circular law). At the time of writing, this conjecture is still open. Note
that the ESD of the i.i.d.\ matrix $(n^{-1/2}V_{i,j})_{1\leq {i,j} \leq n}$ is
known to converge weakly to the circular law; see \cite{tao-vu-cirlaw-bis} and
references therein.

\medskip

\noindent {\bf Heavy--tailed weights}. Recently, remarkable work has been
devoted to the spectral analysis of large dimensional symmetric random
matrices with heavy--tailed i.i.d.\ entries, see e.g.\
\cite{MR2081462,MR2548495,benarous-guionnet,Zakharevich,MR2371333}.
Similarly, on the complete graph, one may consider the bulk and edge behavior
of the random reversible Markov kernels constructed by \eqref{eq:srw} when the
law $\mathcal{L}$ of the weights is heavy--tailed (i.e.\ with at least an
infinite second moment). In that case, and in contrast with Theorem
\ref{th:wigner}, the scaling is not $\sqrt{n}$ and the limiting spectral
distribution is not Wigner's semi--circle law. We study such heavy--tailed
models elsewhere \cite{bordenave-caputo-chafai-ii}. Another interesting model
is the so called \emph{trap model} which corresponds to put heavy--tailed
weights only on the diagonal of $\mathbf{U}$ (holding times), see e.g.\
\cite{MR2152251} for some recent advances.

\section{Proofs for the complete graph model}
\label{se:wigner}

Here we prove Theorems \ref{th:wigner}, \ref{th:wigneredge}, Proposition
\ref{th:wignerinv} and Corollary \ref{co:wignerstrong}. In the whole sequel,
we denote by $L^2(1)$ the Hilbert space $\mathds{R}^n$ equipped with the
scalar product
$$
\scalar{x}{y} = \sum_{i=1}^nx_i\,y_i.
$$
The following simple lemma allows us to work with symmetric matrices when
needed.

\begin{lemma}[Spectral equivalence]\label{le:equiv}
  Almost surely, for large enough $n$, the spectrum of the reversible Markov
  matrix $K$ coincides with the spectrum of the symmetric matrix $S$ defined
  by
  $$
  S_{i,j}= \sqrt{\frac{\rho_i}{\rho_j}}K_{i,j}
  =\frac{U_{i,j}}{\sqrt{\rho_i\rho_j}}\,.
  $$
  Moreover, the corresponding eigenspaces dimensions also coincide.
\end{lemma}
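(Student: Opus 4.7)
The proof is essentially a similarity transform, with a preliminary almost-sure positivity check for the $\rho_i$'s so that the transform makes sense.

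The plan is to introduce the diagonal matrix $D=\mathrm{diag}(\sqrt{\rho_1},\dots,\sqrt{\rho_n})$ and observe that, whenever $D$ is invertible, a direct computation gives
\[
(DKD^{-1})_{i,j}=\sqrt{\frac{\rho_i}{\rho_j}}\,K_{i,j}=\frac{U_{i,j}}{\sqrt{\rho_i\rho_j}}=S_{i,j},
\]
so $S=DKD^{-1}$. In particular $K$ and $S$ are similar, hence they have the same characteristic polynomial and therefore the same eigenvalues with the same algebraic multiplicities. Similar matrices also have identical geometric multiplicities (the isomorphism $x\mapsto D^{-1}x$ maps $\ker(\lambda I-S)$ onto $\ker(\lambda I-K)$), so the eigenspace dimensions coincide as claimed. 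Symmetry of $S$ is immediate from $U_{i,j}=U_{j,i}$.

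The only nontrivial point is thus to show that, almost surely, $D$ is invertible for all sufficiently large $n$, i.e.\ that $\rho_i>0$ for every $1\leq i\leq n$. Since $\mathcal L$ has positive mean $m=1$, the probability $p=\mathds{P}(U_{1,1}>0)$ is strictly positive. The variables $(U_{i,j})_{1\leq j\leq n}$ making up the $i$th row--sum are i.i.d., so
\[
\mathds{P}(\rho_i=0)=(1-p)^n,
\qquad
\mathds{P}\bigl(\exists\,i\leq n:\rho_i=0\bigr)\leq n(1-p)^n.
\]
The right-hand side is summable in $n$, so by the Borel--Cantelli lemma, almost surely $\rho_i>0$ for every $i$ as soon as $n$ is large enough. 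On this full-probability event, the similarity $S=DKD^{-1}$ holds and yields the lemma.

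I expect no serious obstacle: the similarity computation is a one-line verification, and the only subtlety is the almost-sure non-degeneracy of the $\rho_i$'s, handled by the Borel--Cantelli estimate above. (Under the stronger standing hypothesis $0<\sigma^2<\infty$ used in Theorems~\ref{th:wigner} and~\ref{th:wigneredge}, a uniform law of large numbers in fact gives the much stronger statement $\rho_i=n(1+o(1))$ uniformly in $i$, but only positivity is needed here.)
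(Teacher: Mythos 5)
Your proof is correct and essentially the same as the paper's: conjugation by $D=\mathrm{diag}(\sqrt{\rho_1},\dots,\sqrt{\rho_n})$ is exactly the isometry $x\mapsto\widehat{x}=(x_1\sqrt{\rho_1},\dots,x_n\sqrt{\rho_n})$ from $L^2(\rho)$ to $L^2(1)$ that the paper uses to identify $K$ (self-adjoint on $L^2(\rho)$) with the symmetric matrix $S$. Your Borel--Cantelli argument for the almost-sure positivity of the $\rho_i$ is a valid way to justify the step the paper takes for granted (it is noted in the introduction that a.s.\ for $n$ large there are no isolated vertices).
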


\begin{proof}
  Almost surely, for large enough $n$, all the $\rho_i$ are positive and 
  $K$ is self--adjoint as an operator from $L^2(\rho)$ to $L^2(\rho)$, where
  $L^2(\rho)$ denotes $\mathds{R}^n$ equipped with the scalar product
  $$
  \left<x,y\right>_{\rho}=\sum_{i=1}^n\rho_i\,x_i\,y_i.
  $$
  It suffices to observe that 
  a.s.\ for large enough $n$, 
  the map $x\mapsto
  \widehat x$ defined by
  $$
  \widehat{x}=\left(x_1\sqrt{\rho_1},\ldots,x_n\sqrt{\rho_n}\right)
  $$
  is an isometry from $L^2(\rho)$ to $L^2(1)$ and that for any
  $x,y\in\mathds{R}^n$ and $1\leq i\leq n$, we have
  $$
  (Kx)_i=\sum_{j=1}^nK_{i,j}x_j
  $$
  and
  $$
  \left<Kx,y\right>_{\rho} %
  =\sum_{i,j=1}^n K_{i,j}x_jy_i\rho_i %
  =\sum_{i,j=1}^n U_{i,j}x_jy_i %
  =\sum_{i,j=1}^n S_{i,j}\widehat{x}_i\widehat{y}_j %
  =\left<S\widehat{x},\widehat{y}\right>.
  $$
\end{proof}

The random symmetric matrix $S$ has non--centered, non--independent entries.
Each entry of $S$ is bounded and belongs to the interval $[0,1]$, since for
every $1\leq i,j\leq n$, we have $S_{i,j}\leq
U_{i,j}/\sqrt{U_{i,j}U_{j,i}}=1$. In the sequel, for any $n\times n$ real
symmetric matrix $A$, we denote by
$$
\lambda_n(A)\leq\cdots\leq\lambda_1(A)
$$
its ordered spectrum. We shall also denote by $\|A\|$ the operator norm of
$A$, defined by
$$
\|A\|^2 = \max_{x\in\mathds{R}^n}\frac{\scalar{Ax}{A x}}{\scalar{x}{x}}.
$$
Clearly, $\|A\| = \max(\lambda_1(A),-\lambda_n(A))$. To prove Theorem
\ref{th:wigner} we shall compare the symmetric random matrix $\sqrt n\,S$ with
the symmetric $n\times n$ random matrices
\begin{equation}\label{eq:wi}
  W_{i,j} = \frac{U_{i,j} -1}{\sqrt n} %
  \quad\text{and}\quad %
  \wt W_{i,j} = \frac{U_{i,j}}{\sqrt n}.
\end{equation}
Note that $W$ defines a so called Wigner matrix, i.e.\ $W$ is symmetric and it
has centered i.i.d.\ entries with finite positive variance. We shall also need
the non--centered matrix $\wt W$. It is well known that under the sole
assumption $\sigma^2\in(0,\infty)$ on $\mathcal{L}$, almost surely,
$$
\mu_{W} %
\underset{n\to\infty}{\overset{w}{\longrightarrow}}\mathcal{W}_{2\sigma}%
\quad\text{and}\quad %
\mu_{\wt W} %
\underset{n\to\infty}{\overset{w}{\longrightarrow}}\mathcal{W}_{2\sigma}
$$
where $\mu_W$ and $\mu_{\wt W}$ are the ESD of $W$ and $\wt W$, see e.g.\ 
\cite[Theorems 2.1 and 2.12]{MR1711663}. Note that $\wt W$ is a rank one
perturbation of $W$, which implies that the spectra of $W$ and $\wt W$ are
interlaced (Weyl-Poincar\'e inequalities, see e.g.\ 
\cite{MR1091716,MR1711663}). Moreover, under the assumption of finite fourth
moment on $\mathcal{L}$, it is known that almost surely
$$
\lambda_n(W)\to -2\sigma  \quad\text{and}\quad \lambda_1(W)\to +2\sigma.
$$
In particular, almost surely,
\begin{equation}\label{eq:wio} 
  \|W\| = 2\sigma + o(1)\,.
\end{equation}
On the other hand, and still under the finite fourth moment assumption, almost
surely,
$$
\lambda_1(\wt W)\to+\infty %
\quad\text{while}\quad %
\lambda_2(\wt W)\to +2\sigma %
\quad\text{and}\quad %
\lambda_{n}(\wt W)\to -2\sigma
$$
see e.g.\ \cite{MR958213,MR637828,MR1711663}. Heuristically, when $n$ is
large, the law of large numbers implies that $\rho_i$ is close to $n$ (recall
that here $\mathcal{L}$ has mean $1$), and thus $\sqrt n \,S$ is close to $\wt
W$. The main tools needed for a comparison of the matrix $\sqrt{n}S$ with $\wt
W$ are given in the following subsection.

\subsection*{Uniform law of large numbers}

We shall need the following Kolmogorov-Marcinkiewicz-Zygmund strong uniform
law of large numbers, related to Baum-Katz type theorems.

\begin{lemma}
\label{le:LLN-gen}
  Let $(A_{i,j})_{i,j\geq1}$ be a symmetric array of i.i.d.\  random variables.
  For any reals $a>1/2$, $b\geq0$, and $M>0$, if
  $\mathds{E}(|A_{1,1}|^{(1+b)/a})<\infty$ then
  $$
  \max_{1\leq i\leq Mn^b}\biggr|\sum_{j=1}^n(A_{i,j}-c)\biggr|=o(n^a) %
  \quad\text{a.s.}\quad\text{where}\quad %
  c=\begin{cases}
    \mathds{E}(A_{1,1}) & \text{if $a\leq1$} \\
    \text{any number} & \text{if $a>1$}.
  \end{cases}
  $$
\end{lemma}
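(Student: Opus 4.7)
The plan is to combine a one-row Baum--Katz--Hsu--Robbins tail estimate with a union bound over rows and a Borel--Cantelli argument along the dyadic subsequence $n_k = 2^k$.

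First I reduce to a centered setting. For $a \leq 1$ the exponent $q := (1+b)/a \geq 1$ forces $\mathds{E}(A_{1,1})$ to exist, so $X_{i,j} := A_{i,j} - \mathds{E}(A_{1,1})$ has zero mean and inherits the moment bound $\mathds{E}|X_{1,1}|^q<\infty$. For $a>1$ any deterministic shift $nc$ is $O(n)=o(n^a)$ and is therefore harmless, so I may replace $c$ by any convenient value; in particular I may take $X_{i,j}=A_{i,j}$ and target a bound on $\max_i|\sum_{j=1}^n X_{i,j}|$. Write $S_m^{(i)} := \sum_{j=1}^m X_{i,j}$.

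The key one-row ingredient is the tail estimate
\begin{equation*}
\sum_{n\geq 1} n^{b-1}\,\mathds{P}\Bigl(\max_{1\leq m\leq n}|S_m^{(1)}|>\varepsilon n^a\Bigr)<\infty \qquad\text{for every }\varepsilon>0,
\end{equation*}
which is precisely the Baum--Katz--Hsu--Robbins criterion at parameters $(p,r)=(a,q)$, noting that $pr=1+b$. The standard proof proceeds by truncation at level $n^a$: letting $Y_j = X_{1,j}\mathds{1}\{|X_{1,j}|\leq n^a\}$ and $Z_j = X_{1,j}-Y_j$, one splits $S_m^{(1)}$ into the exceedance sum (whose non-vanishing probability is at most $n\,\mathds{P}(|X_{1,1}|>n^a) = O(n^{-b})$), the centered truncated sum (controlled by a Chebyshev/Rosenthal moment bound), and the deterministic centering drift, which for $q\geq 1$ satisfies
\begin{equation*}
n\,|\mathds{E}[X_{1,1}\mathds{1}\{|X_{1,1}|>n^a\}]|\leq n\cdot n^{-a(q-1)}\,\mathds{E}[|X_{1,1}|^q\mathds{1}\{|X_{1,1}|>n^a\}]=o(n^{a-b})=o(n^a)
\end{equation*}
by dominated convergence. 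In the case $q<1$ (which forces $a>1$) no centering is required, and an analogous truncation combined with subadditivity-of-power arguments yields the same summability.

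Given the one-row estimate, the union bound $\mathds{P}(\max_{i\leq Mn^b}\max_{m\leq n}|S_m^{(i)}|>\varepsilon n^a)\leq Mn^b\,\mathds{P}(\max_{m\leq n}|S_m^{(1)}|>\varepsilon n^a)$, combined with the dyadic comparison $\sum_k n_k^b\,\mathds{P}(\max_{m\leq n_k}|S_m^{(1)}|>\varepsilon n_k^a)\leq C\sum_n n^{b-1}\,\mathds{P}(\max_{m\leq n}|S_m^{(1)}|>\varepsilon n^a)<\infty$, yields $\sum_k \mathds{P}(\max_{i\leq Mn_k^b}\max_{m\leq n_k}|S_m^{(i)}|>\varepsilon n_k^a)<\infty$. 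Borel--Cantelli then gives that almost surely, for every $k$ large enough, $\max_{i\leq Mn_k^b}\max_{m\leq n_k}|S_m^{(i)}|\leq \varepsilon n_k^a$. For $n\in[n_k,n_{k+1})$ one has $Mn^b\leq 2^b Mn_k^b$ and $n\leq n_{k+1}=2n_k$, so running the above with $M$ replaced by $2^b M$ gives $\max_{i\leq Mn^b}|S_n^{(i)}|\leq \varepsilon n_{k+1}^a\leq 2^a\varepsilon n^a$. Letting $\varepsilon\downarrow 0$ along a countable sequence establishes the claim almost surely. The main obstacle is the sharp one-row Baum--Katz tail: each of the three truncation contributions produces exactly the exponent forced by $\mathds{E}|A_{1,1}|^{(1+b)/a}<\infty$, leaving essentially no slack, and the dominated-convergence refinement of the drift is what enables the borderline case $b=0$ to go through.
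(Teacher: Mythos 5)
Your argument is sound, but note that the paper does not actually prove this lemma: it simply cites Lemma~2 of the reference \cite{MR1235416} (Bai--Yin), which establishes exactly this uniform complete-convergence statement for a non-symmetric array via truncation, and then remarks that the symmetry of the array (which makes the row sums $\sum_j A_{i,j}$ dependent across $i$) does not disturb that argument. Your route --- a one-row Baum--Katz tail estimate $\sum_n n^{b-1}\,\mathds{P}(\max_{m\leq n}|S_m^{(1)}|>\varepsilon n^a)<\infty$, a union bound over the $O(n^b)$ rows, and Borel--Cantelli along dyadic scales with interpolation --- is a clean self-contained substitute, and it disposes of the symmetry issue for free: the union bound needs only that each row is an i.i.d.\ sample (true, since the entries of a fixed row correspond to distinct unordered index pairs), not independence across rows, which is precisely the point the paper's citation glosses over. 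One local inaccuracy in your sketch of the one-row estimate: bounding the exceedance contribution termwise by $n\,\mathds{P}(|X_{1,1}|>n^a)=O(n^{-b})$ via Markov is too lossy, since $\sum_n n^{b-1}\cdot n^{-b}$ diverges; the correct step is to sum the series $\sum_n n^{b}\,\mathds{P}(|X_{1,1}|>n^a)\asymp \mathds{E}|X_{1,1}|^{(1+b)/a}<\infty$ directly, which is exactly where the moment hypothesis enters with no slack. Since you are in any case invoking the classical Baum--Katz theorem, whose standard proofs do precisely this, the lemma is safely established either by your argument or by the paper's citation.
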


\begin{proof}
  This result is proved in \cite[Lemma 2]{MR1235416} for a non--symmetric
  array. The symmetry makes the random variables
  $(\sum_{j=1}^nA_{i,j})_{i\geq1}$ dependent, but a careful analysis of the
  argument shows that this is not a problem except for a sort of converse, see
  \cite[Lemma 2]{MR1235416} for details.
\end{proof}

\begin{lemma}
\label{le:LLN}
  If $\mathcal{L}$ has finite moment of order $\kappa\in[1,2]$ then 
  \begin{equation}\label{eq:LLN0}
    \max_{1\leq i\leq n^{\kappa-1}}\left|\frac{\rho_i}{n}-1\right|=o(1)
  \end{equation}
  almost surely, and in particular, if $\mathcal{L}$ has finite second moment,
  then almost surely
  \begin{equation}\label{eq:LLN1}
    \max_{1\leq i\leq n}\left|\frac{\rho_i}{n}-1\right|=o(1).
  \end{equation}
  Moreover if $\mathcal{L}$ has finite moment of order $\kappa$ with
  $2\leq\kappa<4$, then almost surely
  \begin{equation}\label{eq:LLN2}
  \max_{1\leq i\leq n}\left|\frac{\rho_i}{n}-1\right|=o(n^{\frac{2-\kappa}{\kappa}}).
  \end{equation}  
  Additionally, if $\mathcal{L}$ has finite fourth moment, then almost surely
  \begin{equation}\label{eq:LLN3}
  \sum_{i=1}^n\left(\frac{\rho_i}{n}-1\right)^2 =O(1)\,. %
  \end{equation}
\end{lemma}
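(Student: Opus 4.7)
The plan is to obtain the first three statements \eqref{eq:LLN0}, \eqref{eq:LLN1}, \eqref{eq:LLN2} as direct applications of the uniform strong law in Lemma \ref{le:LLN-gen} with three different choices of the parameters $(a,b)$, and to obtain the last statement \eqref{eq:LLN3} by a separate, very short argument based on the already recorded Wigner operator-norm bound \eqref{eq:wio}. In all three applications of the uniform SLLN I would take $A_{i,j} = U_{i,j}$ and $c = \mathds{E}(U_{1,1}) = 1$, so that $\sum_{j=1}^n (A_{i,j}-c) = \rho_i - n$.

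\textbf{First three claims.} For \eqref{eq:LLN0} I pick $a = 1$ and $b = \kappa - 1 \in [0,1]$; the moment condition of Lemma \ref{le:LLN-gen} reads $\mathds{E}|U_{1,1}|^{(1+b)/a} = \mathds{E}|U_{1,1}|^\kappa < \infty$, which is exactly the hypothesis. The lemma then returns $\max_{1 \le i \le n^{\kappa-1}}|\rho_i - n| = o(n)$, equivalent to \eqref{eq:LLN0}; statement \eqref{eq:LLN1} is the specialization $\kappa = 2$. For \eqref{eq:LLN2} with $\kappa \in [2,4)$ I would instead choose $a = 2/\kappa$ and $b = 1$: the constraints $1/2 < a \leq 1$ translate into $2 \leq \kappa < 4$, the exponent $(1+b)/a = \kappa$ again matches the assumed moment, and the lemma yields $\max_{1\le i\le n}|\rho_i - n| = o(n^{2/\kappa})$, equivalent to \eqref{eq:LLN2}.

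\textbf{Last claim and main obstacle.} For \eqref{eq:LLN3} the observation is that with $W$ the centered Wigner matrix of \eqref{eq:wi} and $\mathbf{1} \in \mathds{R}^n$ the all-ones vector, one has $(W\mathbf{1})_i = n^{-1/2}(\rho_i - n)$. Hence
$$
\sum_{i=1}^n\left(\frac{\rho_i}{n}-1\right)^2
= \frac{1}{n^2}\sum_{i=1}^n(\rho_i - n)^2
= \frac{\|W\mathbf{1}\|^2}{n}
\leq \frac{\|W\|^2\,\|\mathbf{1}\|^2}{n}
= \|W\|^2,
$$
and the Bai--Yin-type bound \eqref{eq:wio} gives $\|W\| = 2\sigma + o(1)$ almost surely under the finite fourth moment assumption, which is the desired $O(1)$ estimate. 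The only subtlety worth flagging is that Lemma \ref{le:LLN-gen} cannot reach the borderline case $\kappa = 4$ of the previous argument, since it would require $a = 1/2$ whereas the lemma demands $a > 1/2$; this is precisely why \eqref{eq:LLN3} is handled via the operator-norm input rather than yet another entrywise uniform SLLN.
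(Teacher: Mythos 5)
Your proof is correct and follows essentially the same route as the paper: the same two applications of Lemma \ref{le:LLN-gen} with $(a,b)=(1,\kappa-1)$ and $(a,b)=(2/\kappa,1)$ for the first three claims, and the same identity $\sum_i(\rho_i/n-1)^2=\scalar{W\mathbf{1}}{W\mathbf{1}}/\scalar{\mathbf{1}}{\mathbf{1}}\leq\|W\|^2=O(1)$ via \eqref{eq:wio} for the last one. The only cosmetic difference is that the paper deduces \eqref{eq:LLN1} from \eqref{eq:LLN2} at $\kappa=2$ rather than from \eqref{eq:LLN0}, which is immaterial.
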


\begin{proof} 
  The result \eqref{eq:LLN0} follows from Lemma \ref{le:LLN-gen} with
  $$
  A_{i,j}=U_{i,j},\quad a=M=1,\quad b=\kappa-1.
  $$
  We recover the standard strong law of large numbers with $\kappa=1$. The
  result \eqref{eq:LLN2} -- and therefore \eqref{eq:LLN1} setting
  $\kappa=2$ --  follows from Lemma \ref{le:LLN-gen} with this time
  $$
  A_{i,j}=U_{i,j},\quad a=2/\kappa,\quad b=M=1.
  $$

  Proof of \eqref{eq:LLN3}. We set $\epsilon_i=n^{-1}\rho_i-1$ for every
  $1\leq i\leq n$. Since $\mathcal{L}$ has finite fourth moment, the result
  \eqref{eq:wio} for the centered Wigner matrix $W$ defined by \eqref{eq:wi}
  gives that 
  $$
  \sum_{i=1}^n\epsilon_i^2 %
  =\frac{\scalar{W1}{W1}}{\scalar{1}{1}} %
  \leq \|W\|^2 %
  =4\sigma^2+o(1)=O(1)
  $$
  almost surely.
\end{proof}

We are now able to give a proof of Proposition \ref{th:wignerinv}.

\begin{proof}[Proof of Proposition \ref{th:wignerinv}]
  Since $\mathcal{L}$ has finite first moment, by the strong law of large
  numbers,
  $$
  \rho_1+\cdots+\rho_n %
  =\sum_{i=1}^nU_{i,i} %
  +2\!\!\sum_{1\leq i<j\leq n}\!\! U_{i,j} %
  =n^2(1+o(1))
  $$
  almost surely. For every fixed $i\geq1$, we have also $\rho_i=n(1+o(1))$
  almost surely. As a consequence, for every fixed $i\geq1$, almost surely,
  $$
  \hat{\rho}_i %
  =\frac{\rho_i}{\rho_1+\cdots+\rho_n} %
  =\frac{n(1+o(1))}{n^2(1+o(1))} %
  =\frac{1}{n}(1+o(1)).
  $$
  Moreover, since $\mathcal{L}$ has finite second moment, the $o(1)$ in the
  right hand side above is uniform over $1\leq i\leq n$ thanks to
  \eqref{eq:LLN1} of Lemma \ref{le:LLN}. This achieves the proof.
\end{proof}

Note that, under the second moment assumption, 
$\hat{\rho}_i=n^{-1}(1+O(\delta))$ for $1\leq i\leq n$, where
\begin{equation}\label{eq:delta}
  \delta: = \max_{1\leq i\leq n} |\epsilon_i|=o(1)\,, %
  \quad\text{with}\quad %
  \epsilon_i:=n^{-1}\rho_i-1.
\end{equation}
We will repeatedly use the notation (\ref{eq:delta}) in the sequel.

\subsection*{Bulk behavior}

Lemma \ref{le:equiv} reduces Theorem \ref{th:wigner} to the study of the ESD
of $\sqrt{n}S$, a symmetric matrix with non independent entries. One can find
in the literature many extensions of Wigner's theorem to symmetric matrices
with non--i.i.d.\ entries. However, none of these results seems to apply here
directly.

\begin{proof}[Proof of Theorem \ref{th:wigner}]
  We first recall a standard fact about comparison of spectral densities of
  symmetric matrices. Let $L(F,G)$ denote the L\'evy distance between two
  cumulative distribution functions $F$ and $G$ on $\mathds{R}$, defined by
  $$
  L(F,G)=\inf\{\varepsilon>0\text{ such that } %
  F(\cdot-\varepsilon)-\epsilon %
  \leq G \leq F(\cdot+\epsilon)+\epsilon)\}\,.
  $$
  It is well known \cite{MR1700749} that the L\'evy distance is a metric for
  weak convergence of probability distributions on $\mathds{R}$. If $F_A$ and
  $F_B$ are the cumulative distribution functions of the empirical spectral
  distributions of two hermitian $n\times n$ matrices $A$ and $B$, we have the
  following bound for the third power of $L(F_A,F_B)$ in terms of the trace of
  $(A-B)^2$:
  \begin{equation}\label{eq:l3bound}
    L^3(F_A,F_B)\leq\frac{1}{n}\mathrm{Tr}((A-B)^2)
    =\frac{1}{n}\sum_{i,j=1}^n (A_{i,j}-B_{i,j})^2\,.
  \end{equation}
  The proof of this estimate is a consequence of the Hoffman-Wielandt
  inequality \cite{MR0052379}, see also \cite[Lemma 2.3]{MR1711663}. 
  By Lemma \ref{le:equiv}, we have $\sqrt{n}\lambda_k(K)=\lambda_k(\sqrt{n}S)$
  for every $1\leq k\leq n$. We shall use the bound \eqref{eq:l3bound} for the
  matrices $A=\sqrt{n}S$ and $B=\wt W$, where $\wt W$ is defined in
  \eqref{eq:wi}. We will show that a.s.
  \begin{equation}\label{eq:clab}
    \frac{1}{n}\sum_{i,j=1}(A_{i,j}-B_{i,j})^2 %
    =O(\delta^2)\,,
  \end{equation}
  where $\delta=\max_i |\epsilon_i|$ as in \eqref{eq:delta}. Since
  $\mathcal{L}$ has finite positive variance, we know that the ESD of $B$
  tends weakly as $n\to\infty$ to the semi--circle law on
  $[-2\sigma,+2\sigma]$. Therefore the bound \eqref{eq:clab}, with
  \eqref{eq:l3bound} and the fact that $\delta\to 0$ as $n\to\infty$ is
  sufficient to prove the theorem. We turn to a proof of \eqref{eq:clab}. For
  every $1\leq i,j\leq n$, we have
  $$
  A_{i,j}-B_{i,j}
  =\frac{U_{i,j}}{\sqrt{n}}\left(\frac{n}{\sqrt{\rho_i\rho_j}}-1\right)\,.
  $$
  Set, as usual $\rho_i = n(1+\epsilon_i)$ and define $\psi_i =
  (1+\epsilon_i)^{-\frac12} - 1$. Note that by Lemma \ref{le:LLN}, almost
  surely, $\psi_i = O(\delta)$ uniformly in $i=1,\dots,n$. Also,
  $$
  \frac{n}{\sqrt{\rho_i\rho_j}}-1 = (1+\psi_i)(1+\psi_j) -1 =
  \psi_i+\psi_j + \psi_i\psi_j\,.
  $$
  In particular, $\frac{n}{\sqrt{\rho_i\rho_j}}-1 = O(\delta)$.
  Therefore
  $$ \frac{1}{n}\sum_{i,j=1}(A_{i,j}-B_{i,j})^2
  \leq O(\delta^2)\, \left(\frac{1}{n^2}\sum_{i,j=1}^n U_{i,j}^2\right)\,.
  $$ 
  By the strong law of large numbers, $\frac{1}{n^2}\sum_{i,j=1}^n U_{i,j}^2
  \to \sigma^2 + 1$ a.s., which implies \eqref{eq:clab}.
\end{proof}

\subsection*{Edge behavior}
We turn to the proof of Theorem \ref{th:wigneredge} which concerns the
edge of $\sigma(\sqrt{n}S)$.

\begin{proof}[Proof of Theorem \ref{th:wigneredge}]
  Thanks to Lemma \ref{le:equiv} and the global behavior proven in Theorem
  \ref{th:wigner}, it is enough to show that, almost surely,
  $$
  \limsup_{n\to\infty}\sqrt{n}\max(|\lambda_2(S)|,|\lambda_n(S)|)
  \leq 2\sigma\,.
  $$
  Since $K$ is almost surely irreducible for large enough $n$, the eigenspace
  of $S$ of the eigenvalue $1$ is almost surely of dimension $1$, and is
  given by $\mathds{R}(\sqrt{\rho_1},\ldots,\sqrt{\rho_n})$. Let $P$ be the
  orthogonal projector on $\mathds{R}\sqrt{\rho}$. The matrix $P$ is $n\times
  n$ symmetric of rank $1$, and for every $1\leq i,j\leq n$,
  $$
  P_{i,j}=\frac{\sqrt{\rho_i\rho_j}}{\sum_{k=1}^n\rho_k}.
  $$
  The spectrum of the symmetric matrix $S-P$ is
  $$
  \{\lambda_n(S),\ldots,\lambda_2(S)\}\cup\{0\}.
  $$
  By subtracting $P$ from $S$ we remove the largest eigenvalue $1$ from the
  spectrum, without touching the remaining eigenvalues. Let $\mathcal{V}$ be
  the random set of vectors of unit Euclidean norm 
  which are orthogonal to $\sqrt{\rho}$ for the scalar product
  $\left<\cdot,\cdot\right>$ of $\mathds{R}^n$. We have then
  $$
  \sqrt{n}\max(|\lambda_2(S)|,|\lambda_n(S)|)
  =\max_{v\in\mathcal{V}} \left|\left<\sqrt{n}Sv,v\right>\right|
  =\max_{v\in\mathcal{V}} |\scalar{\wt Av}{v}|
  $$
  where $\wt A$ is the $n\times n$ random symmetric matrix defined by
  $$
  \wt A_{i,j}=\sqrt{n}(S-P)_{i,j}
  =\sqrt{n}\left(\frac{U_{i,j}}{\sqrt{\rho_i\rho_j}}
    -\frac{\sqrt{\rho_i\rho_j}}{\sum_{k=1}^n\rho_k}\right).
  $$
  In Lemma \ref{le:control} below we establish that almost surely $
  \scalar{v}{(\wt A - W)v} = O(\delta)+O(n^{-1/2})$ uniformly in
  $v\in\mathcal{V}$, where $W$ is defined in \eqref{eq:wi} and $\delta$ is
  given by \eqref{eq:delta}. Thus, using \eqref{eq:wio},
  $$
  \left|\scalar{Wv}{v}\right| \leq \max (|\lambda_1(W)|,|\lambda_n(W)|) = 2\sigma + o(1)\,,
  $$
  we obtain that almost surely, uniformly in $v\in\mathcal{V}$,
  $$
  | \scalar{\wt Av}{v}|%
  \leq |\scalar{Wv}{v}| + |\scalar{(\wt A- W)v}{v}| %
  = 2\sigma+o(1)+O(\delta).%
  $$
  Thanks to Lemma \ref{le:LLN} we know that $\delta=o(1)$ and the theorem
  follows.
\end{proof}

\begin{lemma}\label{le:control}
  Almost surely, uniformly in $v\in\mathcal{V}$,
  we have, with $\delta:=\max_i |\epsilon_i|$,
  $$
  \scalar{v}{(\wt A - W)v} = O(\delta) + O(n^{-1/2}).
  $$
\end{lemma}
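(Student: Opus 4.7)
The strategy is to write $\wt A - W$ entrywise as an explicit small perturbation and split the quadratic form $\scalar{v}{(\wt A-W)v}$ into three pieces, exploiting the orthogonality condition $v\perp\sqrt{\rho}$ and the estimates of Lemma~\ref{le:LLN}. Setting $\psi_i=(1+\epsilon_i)^{-1/2}-1$ and $\alpha_{ij}=\psi_i+\psi_j+\psi_i\psi_j = n/\sqrt{\rho_i\rho_j}-1$, one has $\psi_i,\alpha_{ij}=O(\delta)$ uniformly by Lemma~\ref{le:LLN}. A direct computation, writing $U_{i,j}/\sqrt n=W_{i,j}+1/\sqrt n$ and $T=\sum_k\rho_k$, yields
\begin{equation*}
(\wt A - W)_{i,j} \;=\; \frac{U_{i,j}}{\sqrt n}\,\alpha_{ij} \;+\; \frac{1}{\sqrt n} \;-\; \frac{\sqrt n\,\sqrt{\rho_i\rho_j}}{T}.
\end{equation*}
Testing against $v$ gives $\scalar{v}{(\wt A-W)v}=\mathcal{A}_1+\mathcal{A}_2+\mathcal{A}_3$, one term for each summand above.

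The third term $\mathcal{A}_3=-\frac{\sqrt n}{T}\bigl(\sum_i v_i\sqrt{\rho_i}\bigr)^2$ \emph{vanishes identically}, since $v\in\mathcal{V}$ means $\sum_i v_i\sqrt{\rho_i}=0$. For $\mathcal{A}_2=\frac{1}{\sqrt n}\bigl(\sum_iv_i\bigr)^2$, I Taylor-expand $\sqrt{\rho_i}=\sqrt n(1+\epsilon_i/2+r_i)$ with $|r_i|\le C\epsilon_i^2$; the orthogonality $\sum_iv_i\sqrt{\rho_i}=0$ then reads $\sum_iv_i=-\tfrac12\sum_iv_i\epsilon_i-\sum_iv_ir_i$, and two applications of Cauchy--Schwarz using $\sum_i\epsilon_i^2=O(1)$ from \eqref{eq:LLN3} give $|\sum_iv_i|=O(1)$, whence $\mathcal{A}_2=O(1/\sqrt n)$.

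The main work is for $\mathcal{A}_1=\sum_{i,j}v_iv_j\,\tfrac{U_{i,j}}{\sqrt n}\alpha_{ij}$. Using the decomposition $\alpha_{ij}=\psi_i+\psi_j+\psi_i\psi_j$ and the diagonal matrix $D=\mathrm{diag}(\psi_i)$, by symmetry
\begin{equation*}
\mathcal{A}_1 \;=\; 2\scalar{Dv}{\wt W v}\;+\;\scalar{Dv}{\wt W Dv},
\end{equation*}
where $\wt W_{i,j}=U_{i,j}/\sqrt n$ is the non-centered matrix from \eqref{eq:wi}. One \emph{cannot} bound these by $\|\wt W\|\cdot\|Dv\|\cdot\|v\|$, since $\|\wt W\|\sim\sqrt n$ by \eqref{eq:lamax}: this is precisely the obstacle. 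I circumvent it by writing $\wt W=W+n^{-1/2}J$, where $J$ is the all-ones matrix. For the $W$-part, $\bigl|\scalar{Dv}{Wv}\bigr|\le\|Dv\|\,\|W\|\,\|v\|\le\delta\cdot(2\sigma+o(1))=O(\delta)$ by \eqref{eq:wio}, and similarly $|\scalar{Dv}{WDv}|=O(\delta^2)$. For the $J$-part, the contributions reduce to $n^{-1/2}\bigl(\sum_iv_i\psi_i\bigr)\bigl(\sum_jv_j\bigr)$ and $n^{-1/2}\bigl(\sum_iv_i\psi_i\bigr)^2$; since $\sum_i\psi_i^2=O(\sum_i\epsilon_i^2)=O(1)$ by \eqref{eq:LLN3}, Cauchy--Schwarz gives $|\sum_iv_i\psi_i|=O(1)$, and combined with $|\sum_iv_i|=O(1)$ from above, each piece is $O(1/\sqrt n)$. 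Summing, $\mathcal{A}_1=O(\delta)+O(1/\sqrt n)$, which completes the estimate.

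\textbf{Main obstacle.} The genuine difficulty is the term $\scalar{Dv}{\wt Wv}$: the naive operator-norm bound fails because $\wt W$ has an outlier of order $\sqrt n$. The key idea is to peel off this outlier by writing $\wt W=W+n^{-1/2}J$, so that the centered $W$ is handled through its bounded operator norm while the rank-one piece $J$ contributes only through the two scalar quantities $\sum_iv_i$ and $\sum_iv_i\psi_i$, both of which are $O(1)$ thanks to the orthogonality of $v$ to $\sqrt{\rho}$ and to Lemma~\ref{le:LLN}.
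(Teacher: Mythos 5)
Your proof is correct and follows essentially the same route as the paper's: both rest on the expansion $n/\sqrt{\rho_i\rho_j}-1=\psi_i+\psi_j+\psi_i\psi_j$, the bound $\sum_i\psi_i^2=O(1)$ from \eqref{eq:LLN3}, the operator-norm bound \eqref{eq:wio} for the centered $W$, and Cauchy--Schwarz combined with $v\perp\sqrt{\rho}$ to control $\sum_i v_i$. The only (harmless) differences are organizational: you observe that the projector term $\scalar{v}{Pv}$ vanishes exactly rather than expanding it to order $O(n^{-1/2})$, and you split off the rank-one part as $\wt W=W+n^{-1/2}J$ inside the quadratic form, where the paper performs the equivalent splitting $U_{i,j}/\sqrt n=W_{i,j}+n^{-1/2}$ at the level of the entries.
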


\begin{proof}
  We start by rewriting the matrix
  $$
  \wt A_{i,j}=\frac{\sqrt n \,U_{i,j}}{\sqrt{\rho_i\rho_j}} - \frac{\sqrt n
    \sqrt{\rho_i\rho_j}}{\sum_k\rho_k}
  $$
  by expanding around the law of large numbers. We set $\rho_i =
  n(1+\epsilon_i)$ and we define
  $$
  \varphi_i = \sqrt{1+\epsilon_i}\, -\, 1 %
  \quad\text{and}\quad %
  \psi_i= \frac1{\sqrt{1+\epsilon_i}} \,-\,1.
  $$
  Observe that $\varphi_i$ and $\psi_i$ are of order $\epsilon_i$ and by Lemma
  \ref{le:LLN}, cf.\ \eqref{eq:LLN3} we have a.s.
  \begin{equation}\label{eq:01}
    \scalar{\varphi}{\varphi} = \sum_i \varphi_i^2 = O(1)
    \quad\text{and}\quad
    \scalar{\psi}{\psi} = \sum_i \psi_i^2 = O(1)\,.
  \end{equation} 
  We expand 
  $$
  \sqrt{\rho_i\rho_j} %
  = n(1+\epsilon_i)^\frac12(1+\epsilon_j)^\frac12 %
  = n(1+\varphi_i)(1+\varphi_j)\,.
  $$
  Similarly, we have
  $$
  \frac{1}{\sqrt{\rho_i\rho_j}} = n^{-1}(1+\psi_i)(1+\psi_j).
  $$
  Moreover, writing 
  $$
  \sum_{k=1}^n \rho_k = n^2\left(1 + \frac1n\sum_k\epsilon_k\right)
  $$
  and setting $\gamma:= (1+\frac1n\sum_k\epsilon_k)^{-1} - 1 $ we see that
  $$
  \left(\sum_{k=1}^n \rho_k \right)^{-1}=
  \frac1{n^2}\,(1 +\gamma) \,.
  $$
  Note that $\gamma=O(\delta)$.
  Using these expansions we obtain 
  $$
  \frac{\sqrt n \,U_{i,j}}{\sqrt{\rho_i\rho_j}}  = \frac1{\sqrt n}\,
  U_{i,j}(1+\psi_i)(1+\psi_j)
  $$
  and
  $$
  \frac{\sqrt n \sqrt{\rho_i\rho_j}}{\sum_k\rho_k}
  = \frac1{\sqrt n}
  \,(1 +\varphi_i)(1+\varphi_j)(1+\gamma)\,.
  $$
  From these expressions, with the definitions $$
  \Phi_{i,j} = \varphi_i + \varphi_j + \varphi_i\varphi_j %
  \quad\text{and}\quad %
  \Psi_{i,j} = \psi_i + \psi_j + \psi_i\psi_j,
  $$
  we obtain
  $$
  \wt A_{i,j} = W_{i,j}(1 + \Psi_{i,j}) %
  +  \frac1{\sqrt n} \, \left[\Psi_{i,j} %
    - \Phi_{i,j}(1+\gamma) + \gamma\right].
  $$
  Therefore, we have
  $$
  \scalar{v}{(W-\wt A)v} %
  = - \sum_{i,j} v_i W_{i,j} \Psi_{i,j} v_j\; %
  +\frac{1+\gamma}{\sqrt n} \,\scalar{v}{\Phi v} %
  -\frac1{\sqrt n} \,\scalar{v}{\Psi v} %
  -\frac\gamma{\sqrt n} \,\scalar{v}{1}^2.
  $$
  Let us first show that 
  \begin{equation}\label{eq:2}
    \scalar{v}{1} = O(1)\,.
  \end{equation}
  Indeed, $v\in\mathcal{V}$ implies that for any $c\in\mathds{R}$,
  $$
  \scalar{v}{1} = \scalar{v}{1-c\sqrt{\rho}}.
  $$
  Taking $c=1/\sqrt n$ we see that
  $$
  1-c\sqrt{\rho_i} %
  = 1 - \sqrt{1+\epsilon_i} = -\varphi_i.
  $$ 
  Thus, Cauchy--Schwarz' inequality implies 
  $$
  \scalar{v}{1}^2\leq
  \scalar{v}{v}\scalar{\varphi}{\varphi}
  $$
  and \eqref{eq:2} follows from \eqref{eq:01} above. Next, we show that
  \begin{equation}\label{eq:3}
    \scalar{v}{\Phi v} = O(1).
  \end{equation}
  Note that
  $$
  \scalar{v}{\Phi v} %
  = 2\scalar{v}{1}\scalar{v}{\varphi} %
  +\scalar{v}{\varphi}^2.
  $$
  Since $\scalar{v}{\varphi}^2 \leq\scalar{v}{v}\scalar{\varphi}{\varphi}$ we
  see that \eqref{eq:3} follows from \eqref{eq:01} and \eqref{eq:2}. In the
  same way we obtain that $\scalar{v}{\Psi v} = O(1)$. So far we have obtained
  the estimate
  \begin{equation}\label{eq:sfa}
    \scalar{v}{(W-\wt A)v} %
    = - \sum_{i,j} v_i W_{i,j} \Psi_{i,j} v_j + O(n^{-1/2}).
  \end{equation}
  To bound the first term above we observe that 
  \begin{align*}
    \sum_{i,j} v_i W_{i,j} \Psi_{i,j} v_j&  = 2 \sum_i \psi_i v_i (W  v)_i
    + \sum_{i,j} \psi_i v_i  W_{i,j} \psi_j v_j\\ &=
    2\scalar{\hat \psi}{Wv} + \scalar{\hat\psi}{W\hat\psi}\,,  
  \end{align*}
  where $\hat \psi$ denotes the vector $\hat \psi_i :=\psi_i v_i$. 
  Note that 
  $$
  \scalar{\hat\psi}{\hat\psi} %
  = \sum_i\psi_i^2 v_i^2 %
  \leq O(\delta^2)\scalar{v}{v} %
  = O(\delta^2).
  $$
  Therefore, by definition of the norm $\|W\|$
  $$
  |\scalar{\hat\psi}{W\hat\psi}|\leq 
  \sqrt{\scalar{\hat\psi}{\hat\psi}}
  \sqrt{\scalar{W\hat\psi}{W\hat\psi}} \leq 
  \|W\|\,\scalar{\hat\psi}{\hat\psi}
  = O(\delta^2)\,\|W\|\,.
  $$
  Similarly, we have
  $$
  |\scalar{\hat \psi}{Wv}|%
  \leq \sqrt{\scalar{\hat\psi}{\hat\psi}}
  \sqrt{\scalar{Wv}{Wv}} %
  \leq O(\delta)\,\|W\| \sqrt{\scalar{v}{v}}
  = O(\delta)\,\|W\|\,.
  $$
  From \eqref{eq:wio}, $\|W\| = 2\sigma + o(1) = O(1)$. Therefore, going back to
  \eqref{eq:sfa} we have obtained
  $$
  \scalar{v}{(W-\wt A)v} = O(\delta) + O(n^{-1/2}).
  $$
\end{proof} 

We end this section with the proof of Corollary \ref{co:wignerstrong}.

\begin{proof}[Proof of Corollary \ref{co:wignerstrong}]  
  By Theorem \ref{th:wigneredge}, almost surely, and for any compact subset
  $C$ of $\mathds{R}$ containing strictly $[0,2\sigma]$, the law
  $\wt\mu_{\sqrt{n}K}$ is supported in $C$ for large enough $n$. On the other
  hand, since
  $\mu_{\sqrt{n}K}=(1-n^{-1})\wt\mu_{\sqrt{n}K}+n^{-1}\delta_{\sqrt{n}}$, we
  get from Theorem \ref{th:wigner} that almost surely, $\wt\mu_{\sqrt{n}K}$
  tends weakly to $\mathcal{W}_{2\sigma}$ as $n\to\infty$. Now, for sequences
  of probability measures supported in a common compact set, by 
  Weierstrass' theorem, weak convergence is equivalent to Wasserstein
  convergence $W_p$ for every $p\geq1$. Consequently, almost surely,
  \begin{equation}\label{eq:cvmut}
    \lim_{n\to\infty}W_p(\wt\mu_{\sqrt{n}K},\mathcal{W}_{2\sigma})=0.
  \end{equation}
  for every $p\geq1$. It remains to study
  $W_p(\mu_{\sqrt{n}K},\mathcal{W}_{2\sigma})$. Recall that if $\nu_1$ and
  $\nu_2$ are two probability measures on $\mathds{R}$ with cumulative
  distribution functions $F_{\nu_1}$ and $F_{\nu_2}$ with respective
  generalized inverses $F_{\nu_1}^{-1}$ and $F_{\nu_2}^{-1}$, then, for every
  real $p\geq1$, we have, according to e.g.\ \cite[Remark 2.19
  (ii)]{MR1964483},
  \begin{equation}\label{eq:wpmut}
    W_p(\nu_1,\nu_2)^p %
    = \int_0^1\!%
    \left|F_{\nu_1}^{-1}(t)-F_{\nu_2}^{-1}(t)\right|^p\,dt.
  \end{equation}
  Let us take
  $\nu_1=\mu_{\sqrt{n}K}=(1-n^{-1})\wt\mu_{\sqrt{n}K}+n^{-1}\delta_{\sqrt{n}}$
  and $\nu_2=\mathcal{W}_{2\sigma}$. Theorem \ref{th:wigneredge} gives
  $\lambda_2(\sqrt{n}K)<\infty$ a.s. Also, a.s., for large enough $n$, and for
  every $t\in(0,1)$,
  $$
  F_{\nu_1}^{-1}(t)=
  F_{\mu_{\sqrt{n}K}}^{-1}(t)=
  \sqrt{n}\mathds{1}_{[1-n^{-1},1)}(t)
  +F_{\wt\mu_{\sqrt{n}K}}^{-1}(t+n^{-1})\mathds{1}_{(0,1-n^{-1})}(t).
  $$
  The desired result follows then by plugging this identity in
  \eqref{eq:wpmut} and by using \eqref{eq:cvmut}.
\end{proof}

\section{Proofs for the chain graph model}
\label{se:chain}
In this section we prove the bulk results in Theorem \ref{th:chainerg} and
Corollary \ref{co:chain} and the edge results in Theorem \ref{th:chainedge}.

\subsection*{Bulk behavior}
\begin{proof}[Proof of Theorem \ref{th:chainerg}]
  Since $\mu_K$ is supported in the compact set $[-1,+1]$ which does not
  depend on $n$, Weierstrass' theorem implies that the weak convergence of
  $\mu_K$ as $n\to\infty$ is equivalent to the convergence of all moments, and
  is also equivalent to the convergence in Wasserstein distance $W_p$ for
  every $p\geq1$. Thus, it suffices to show that a.s. for any $\ell\geq0$, the
  $\ell^\text{th}$ moment of $\mu_K$ converges to
  $\mathds{E}[r_\ell^\mathbf{p}(0)]$ as $n\to\infty$. The sequence
  $(\mathds{E}[r^{\mathbf{p}}_\ell(0)])_{\ell\geq0}$ will be then necessarily
  the sequence of moments of a probability measure $\mu$ on $[-1,+1]$ which is
  the unique adherence value of $\mu_K$ as $n\to\infty$.

  For any $\ell\geq0$ and $i\geq1$ let $r_\ell^{\mathbf{p},n}(i)$ be the
  probability of return to $i$ after $\ell$ steps for the random walk on
  $\{1,\ldots,n\}$ with kernel $K$. Clearly,
  $r_\ell^{\mathbf{p},n}(i)=r_\ell^{\mathbf{p}}(i)$ whenever
  $1+\ell<i<n-\ell$. Therefore, for every fixed $\ell$, the ergodic theorem
  implies that almost surely,
  $$
  \lim_{n\to\infty}\frac{1}{n}\sum_{i=1}^nr_\ell^{\mathbf{p},n}(i) %
  =\lim_{n\to\infty}\frac{1}{n}\sum_{i=1}^nr_\ell^{\mathbf{p}}(i) %
  =\mathds{E}[r_\ell^{\mathbf{p}(0)}].
  $$
  This ends the proof.
\end{proof}

\begin{proof}[Proof of Corollary \ref{co:chain}]
  The desired convergence follows immediately from Theorem \ref{th:chainerg}
  with $\mathbf{p}(i)=(1-V_i,0,V_i)$ for every $i\geq1$. The expression of the
  moments of $\mu$ follows from a straightforward path--counting
  argument for the return probabilities of a one-dimensional random
  walk. \end{proof}  

Let us mention that the proof of Corollary \ref{co:chain} could have been
obtained via the trace-moment method for symmetric tridiagonal matrices.
Indeed, an analog of Lemma \ref{le:equiv} allows one to replace $K$ by a
symmetric tridiagonal matrix $S$. Although the entries of $S$ are not
independent, the desired result follows from a variant of the proof used by
Popescu for symmetric tridiagonal matrices with independent entries
\cite[Theorem 2.8]{MR2480789}. We omit the details.

\begin{remark}[Computation of the moments of $\mu$ for Beta environments]
  \label{re:beta}
  As noticed in Remark \ref{re:arcsine}, the limiting spectral distribution
  $\mu$ is the arc--sine law when $\mathcal{L}=\delta_{1/2}$. Assume now that
  $\mathcal{L}$ is uniform on $[0,1]$. Then for every integers $m\geq0$ and
  $n\geq0$,
  $$
  \mathds{E}(V^{m}(1 - V)^{n}) %
  = \int_0^1\!u^m(1-u)^n\,du %
  = \mathrm{Beta}(n+1,m+1) 
  = \frac{\Gamma(n+1)\Gamma(m+1)}{\Gamma(n+m+2)}
  $$
  which gives
  $$
  \mathds{E}(V^{m} (1 - V)^{n}) %
  = \frac{n!m!}{(n+m+1)!} %
  = \frac{1}{(n+m+1)\binom{n+m}{m}}. 
  $$
  The law of $\binom{n+m}{m}V^m(1-V)^n$ is the law of the probability of
  having $m$ success in $n+m$ tosses of a coin with a probability of success
  $p$ uniformly distributed in $[0,1]$. Similar formulas may be obtained when
  $\mathcal{L}$ is a Beta law $\mathrm{Beta}(\alpha,\beta)$.
\end{remark}

\subsection*{Edge behavior}
\begin{proof}[Proof of Theorem \ref{th:chainedge}] 
  Proof of the first statement. It is enough to show that for every $0<a<1$,
  there exists an integer $k_a$ such that for all $k \geq k_a$,
  \begin{equation} \label{eq:mulowerbound} %
    \int_{-1}^{+1}\! x^{2k} \mu (dx) \geq a^{2k}.
  \end{equation}
  By assumption, there exists $C > 0$ and $0< t_0 <1/2$ such that for all
  $0<t<t_0$,
  $$
  \mathds{P}( V \in [1/2 - t , 1/2 + t] ) \geq C t
  $$
  where $V$ is random variable of law $\mathcal{L}$. In particular, for all $0
  < t < t_0$,
  $$
  \mathds{E}\left[ V ^{N_\gamma (i)} (1 - V)^{N_\gamma (i-1)}\right] %
  \geq C t \left( \frac1 2 - t \right) ^{ N_\gamma (i)+ N_\gamma (i-1)},
  $$
  and, if 
  $$
  \| \gamma \|_{\infty} %
  = \max \{ i \geq 0 : \max( N_\gamma(i), N_\gamma(-i) )  \geq 1 \}
  $$
  then
  \begin{align*}
    \int_{-1}^{+1}\! x^{2k} \mu (dx) 
    &\geq \sum_{\gamma \in D_k}  %
    \prod_{i \in\mathds{Z}} C t\left(\frac{1}{2}-t\right)^{N_\gamma(i)+N_\gamma(i-1)}\\
    &\geq \sum_{\gamma \in D_k}%
    ( C t ) ^{ 2 \|\gamma\|_{\infty} } %
    \left(\frac{1}{2} - t\right)^{ \sum_i  N_\gamma (i)+ N_\gamma (i-1)} \\
    & \geq\left(\frac{1}{2} - t\right)^{2k} %
    \sum_{\gamma \in D_k} ( C t ) ^{ 2 \|\gamma\|_{\infty} } \\
    & \geq\left(\frac{1}{2}-t\right)^{2k}|D_{k,\alpha}|(C t)^{ 2 k^\alpha }, 
  \end{align*}
  where $D_{k,\alpha} = \{ \gamma \in D_k : \| \gamma \|_{\infty} \leq
  k^{\alpha} \}$. Now, from the Brownian Bridge version of Donsker's Theorem
  (see e.g.\ \cite{MR2386089} and references therein),
  for all $\alpha > 1/2$,
  $$
  \lim_{k \to \infty} \frac{ | D_{k,\alpha} | }{ | D_k |} = 1.
  $$
  Since $|D_k|=\mathrm{Card}(D_k)=\binom{2k}{k}$, Stirling's formula gives $|
  D_k | \sim 4^{k} (\pi k )^{-1/2}$, and thus
  $$
  \int_{-1}^{+1}\! x^{2k} \mu (dx) %
  \geq  (\pi k )^{-1/2} ( 1  - 2 t )^{2k}  ( C t ) ^{ 2 k^\alpha } (1 + o(1)).
  $$
  We then deduce the desired result \eqref{eq:mulowerbound} by taking $t$
  small enough such that $1 - 2t > a$ and $1/2 < \alpha < 1$. This achieves
  the proof of the first statement.

  Proof of the second statement. One can observe that if
  $\mathcal{L}=\delta_p$ for some $p\in(0,1)$ with $p\neq1/2$, an explicit
  computation of the spectrum will provide the desired result, in accordance
  with Remark \ref{re:arcsine}. For the general case, we get from
  \cite{MR1710983}, for any $2\leq k\leq n-1$,
  $$
  1-\lambda_2(K) \geq \frac{1}{4\max(B_k^+,B_k^-)}
  $$
  where
  $$
  B_k^+=\max_{i> k}\left[\left(\sum_{j=k+1}^i\frac{1}{\rho_j(1-V_j)}\right)
    \sum_{j\geq i}\rho_j\right]
  \quad\text{and}\quad
  B_k^-=\max_{i<k}\left[\left(\sum_{j=i}^{k-1}\frac{1}{\rho_jV_j}\right)
  \sum_{j\leq i}\rho_j\right]
  $$
  with the convention $V_1=1-V_n=1$. Here we have fixed the value of $n$ and
  $\rho$ is any invariant (reversible) measure for $K$. It is convenient to
  take $\rho_1=1$ and for every $2\leq i\leq n$
  $$
  \rho_i=\frac{V_2\cdots V_{i-1}}{(1-V_2)\cdots(1-V_i)}.
  $$
  By symmetry, it suffices to consider the case where $\mathcal{L}$ is
  supported in $[0,t]$ with $0<t<1/2$. Let us take $k=2$. In this case,
  $B_2^-=1$, and the desired result will follow if we show that $B_2^+$ is
  bounded above by a constant independent of $n$. To this end, we remark first
  that for any $\ell>j$ we have
  $\rho_\ell=\rho_j\prod_{m=j}^{\ell-1}(V_m/(1-V_{m+1}))$. Therefore, setting
  $e^{-\gamma}=t/(1-t)<1$, we have $\rho_\ell\leq\rho_j e^{-\gamma(\ell-j)}$.
  It follows that, for any $k<i$,
  \begin{align*}
    \sum_{j=k+1}^i\sum_{\ell\geq i}\frac{\rho_\ell}{\rho_j(1-V_j)}
    &\leq \frac{1}{1-t}
    \sum_{j=k+1}^ie^{-\gamma(i-j)}\sum_{\ell\geq i}e^{-\gamma(\ell-i)} \\
    &\leq \frac{(1-e^{-\gamma})^{-2}}{1-t}=\frac{1-t}{(1-2t)^2}.
  \end{align*}
  In particular, $B_2^+\leq (1-t)/(1-2t)^2$, which concludes the proof.
\end{proof}

\bigskip

{\noindent\textbf{Acknowledgements.} The second author would like to thank the
  \emph{\'Equipe de Probabilit\'es et Statistique de l'Institut de
    Math\'ematiques de Toulouse} for kind hospitality. The last author would
  like to thank Delphine \textsc{F\'eral} and Sandrine \textsc{P\'ech\'e} for
  interesting discussions on the extremal eigenvalues of symmetric
  non--central random matrices with i.i.d entries.}

\bigskip

\vfill

\begin{center}
  \begin{figure}[htb]
    \begin{center}
      \includegraphics[scale=0.45]{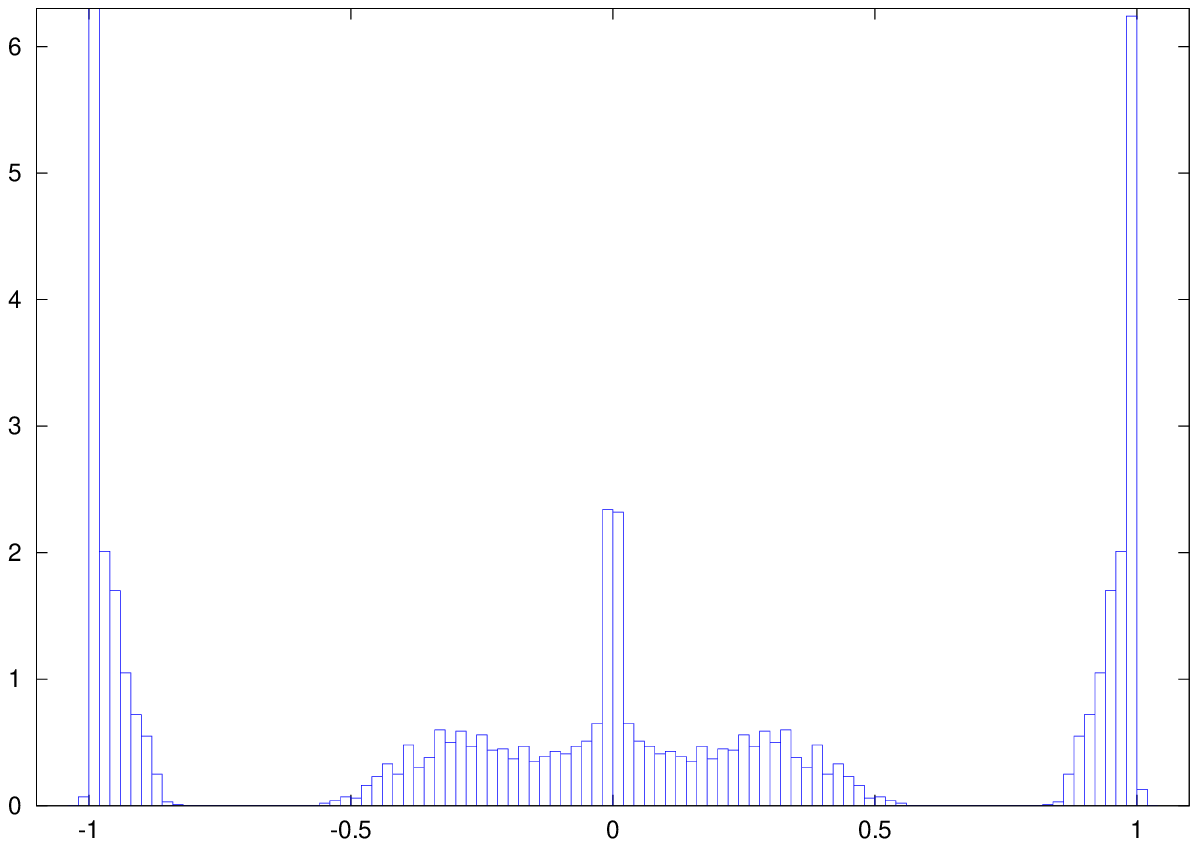}
      \includegraphics[scale=0.45]{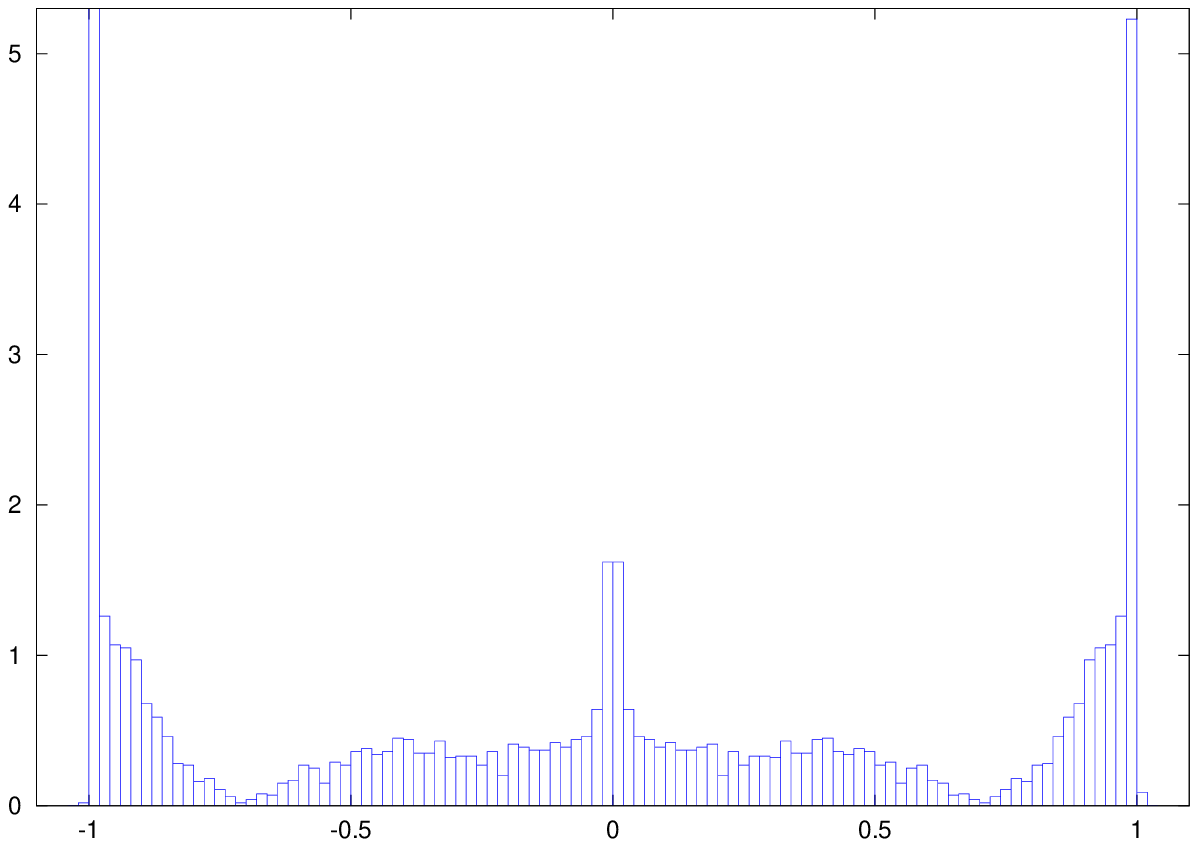}
    \end{center}
    \caption{Plots illustrating Corollary \ref{co:chain}. Each histogram
      corresponds to the spectrum of a single realization of $K$ with
      $n=5000$, for various choices of $\mathcal{L}$. From left to right
      $\mathcal{L}$ is the uniform law on $[0,t]\cup[1-t,1]$ for $t=1/8$,
      $t=1/4$. 
    }
    \label{fi:tridiaga}
  \end{figure}
\end{center}

\begin{center}
  \begin{figure}[htb]
    \begin{center}
      \includegraphics[scale=0.45]{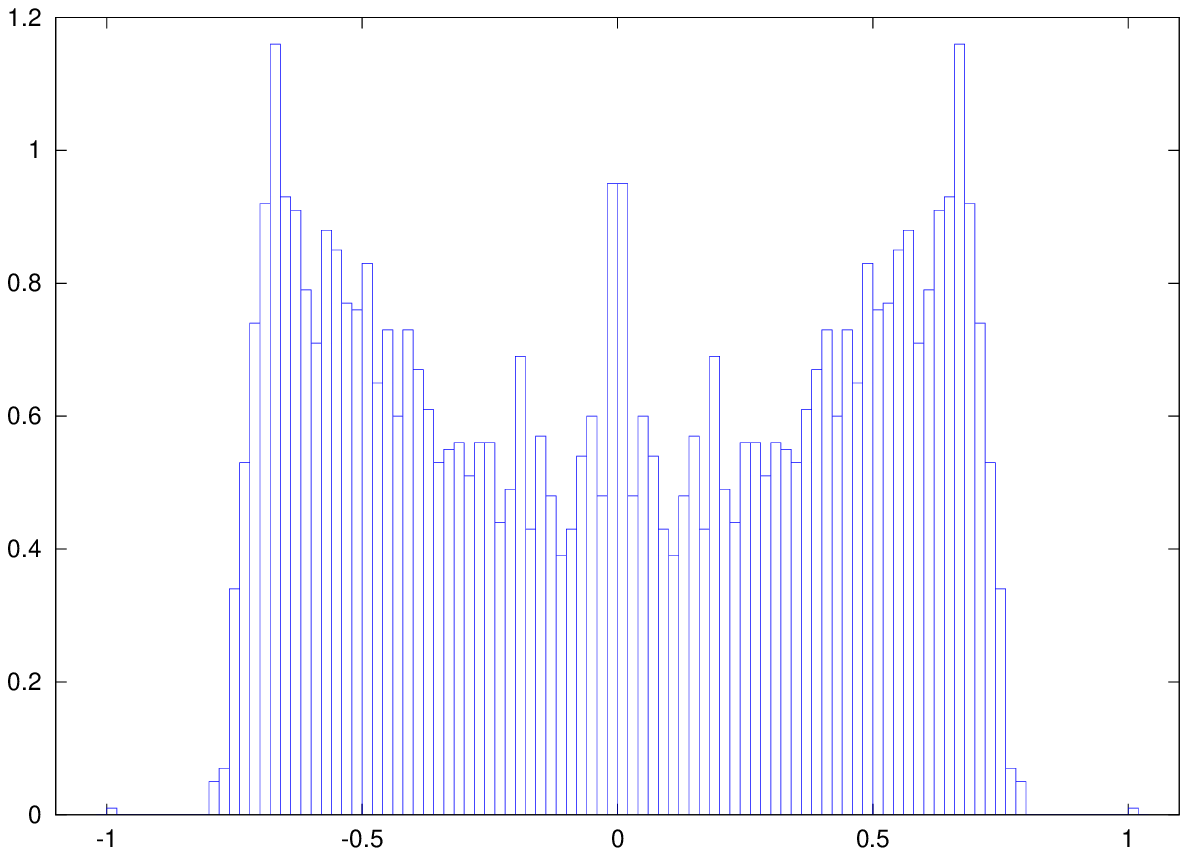}
      \includegraphics[scale=0.45]{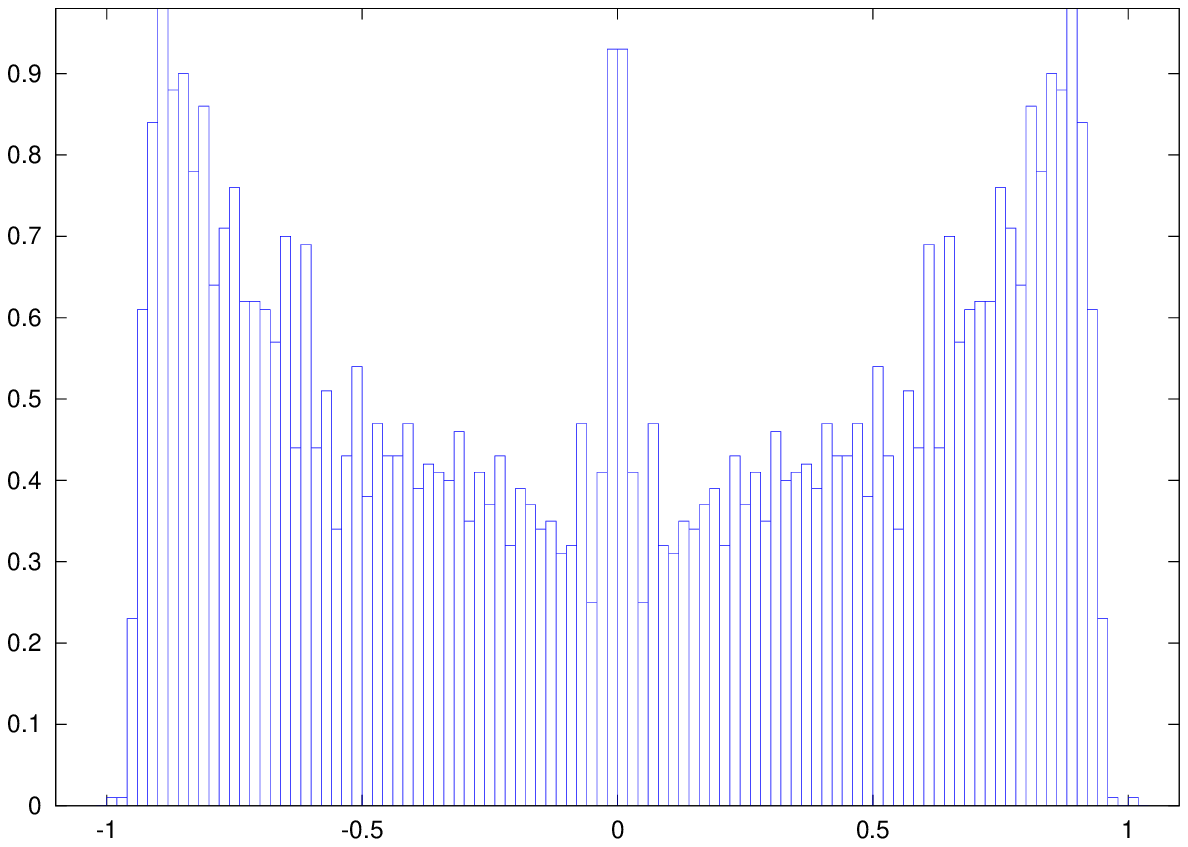}
      
      \includegraphics[scale=0.45]{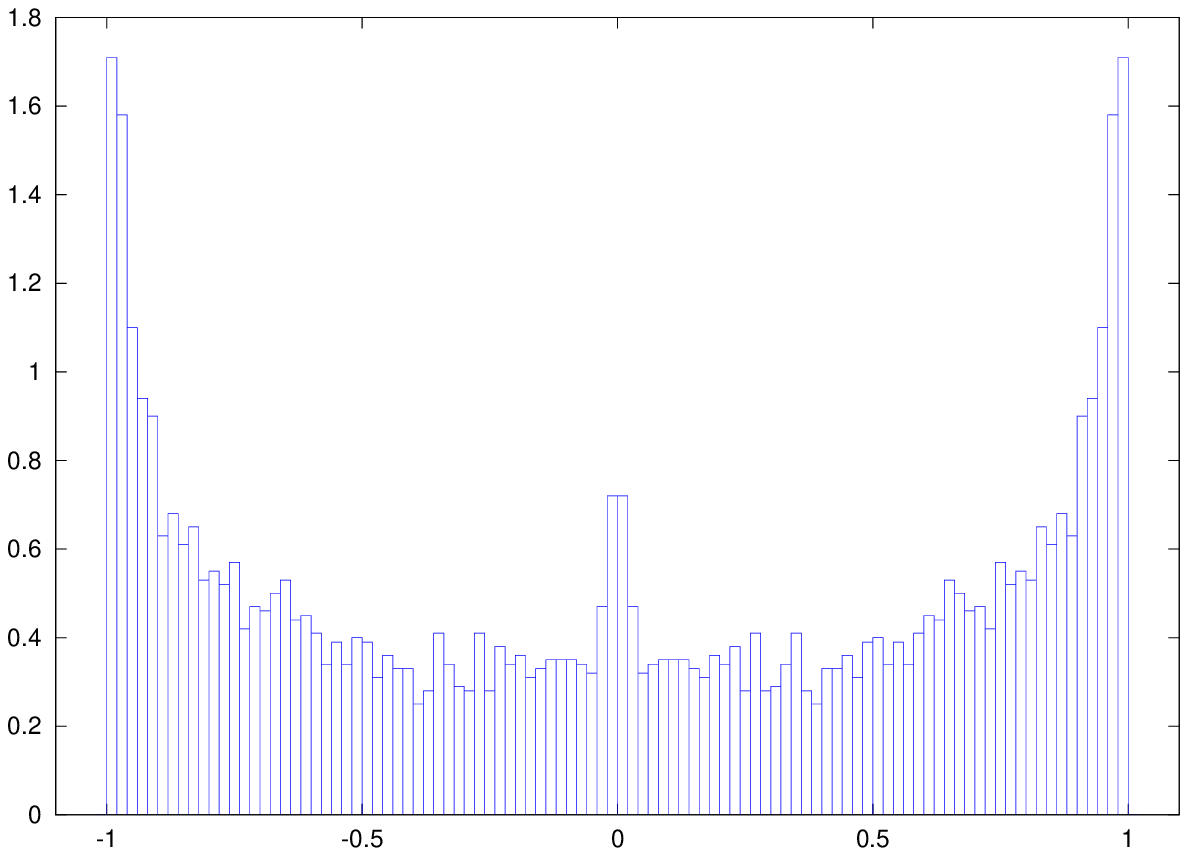}
      \includegraphics[scale=0.45]{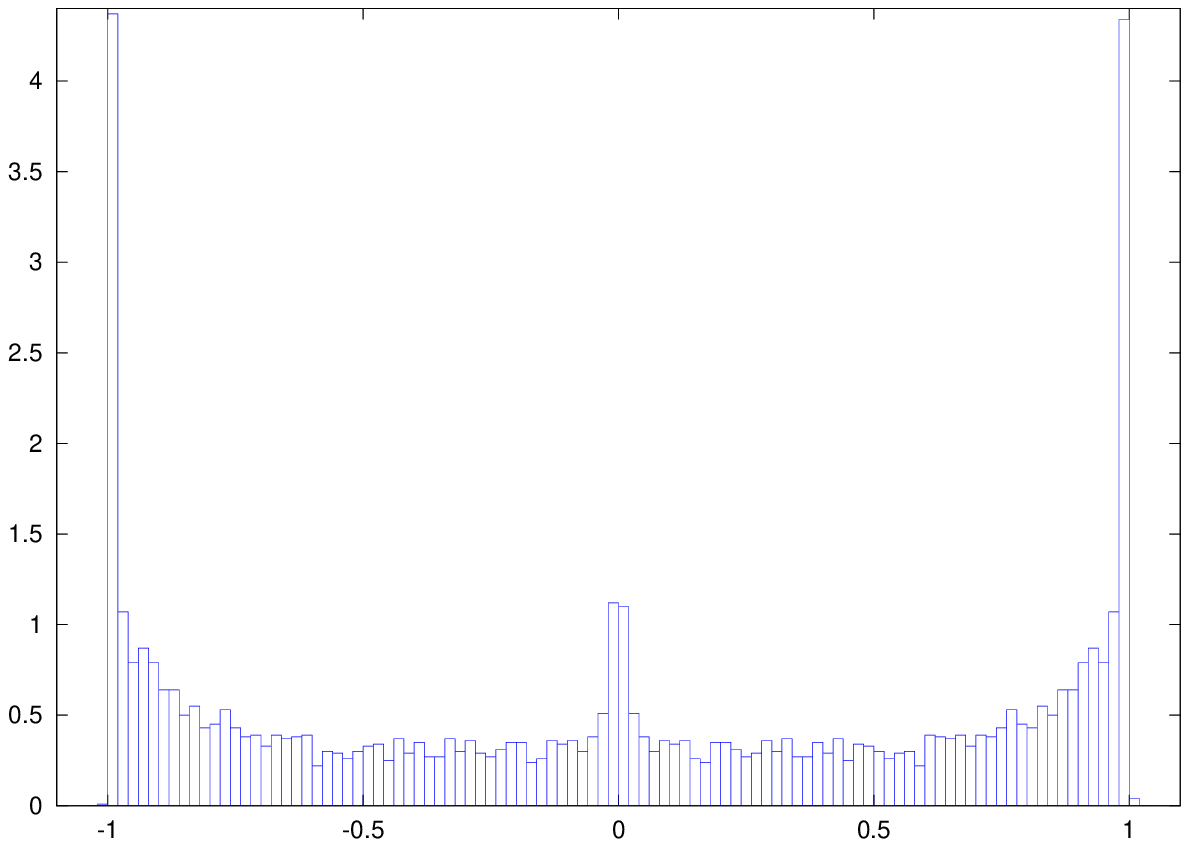}
    \end{center}
    \caption{Plots illustrating Corollary \ref{co:chain}
      and the second statement of Theorem \ref{th:chainedge}. Each histogram
      corresponds to the spectrum of a single realization of $K$ with
      $n=5000$, for various choices of $\mathcal{L}$. From left to right and
      top to bottom, $\mathcal{L}$ is uniform on $[0,t]$ with $t=1/8$,
      $t=1/4$, $t=1/2$, and $t=1$. 
    }
    \label{fi:tridiagb}
  \end{figure}
\end{center}

\begin{center}
  \begin{figure}[htb]
    \begin{center}
      \includegraphics[scale=0.45]{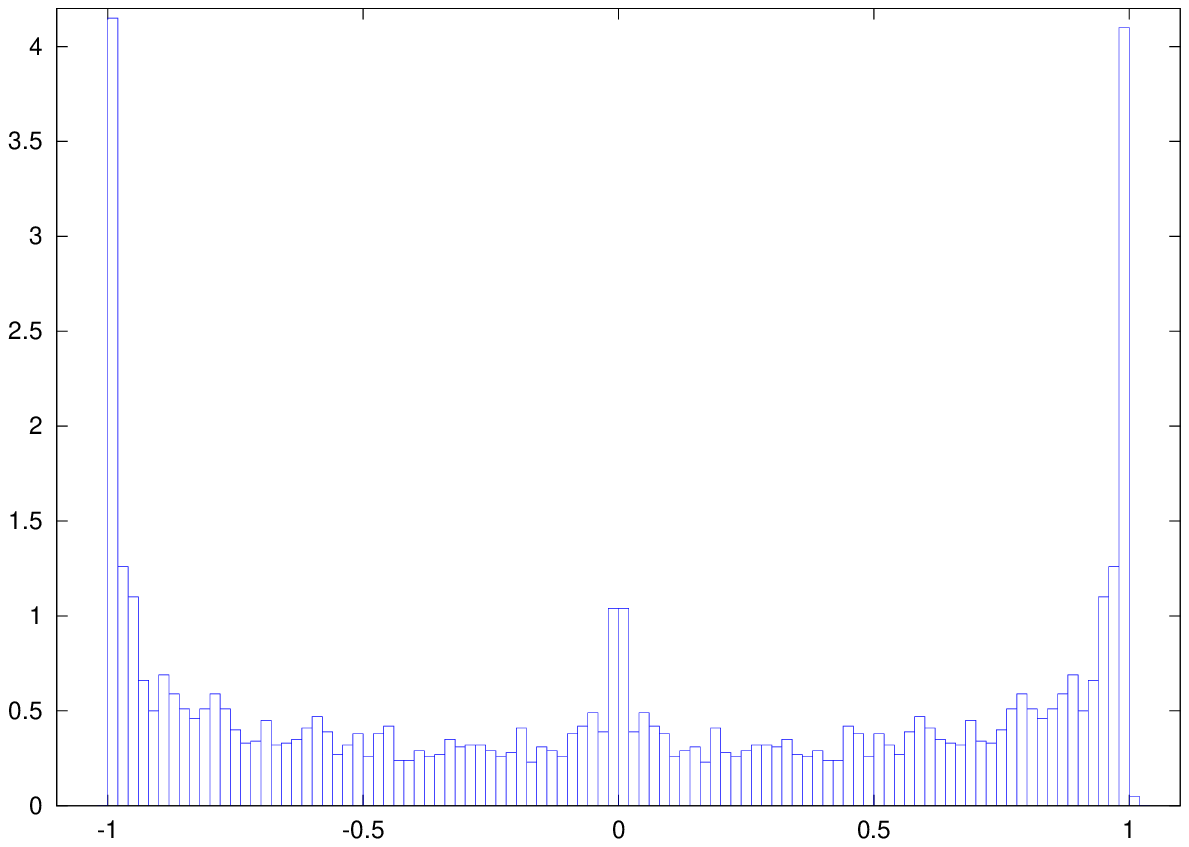}
      \includegraphics[scale=0.45]{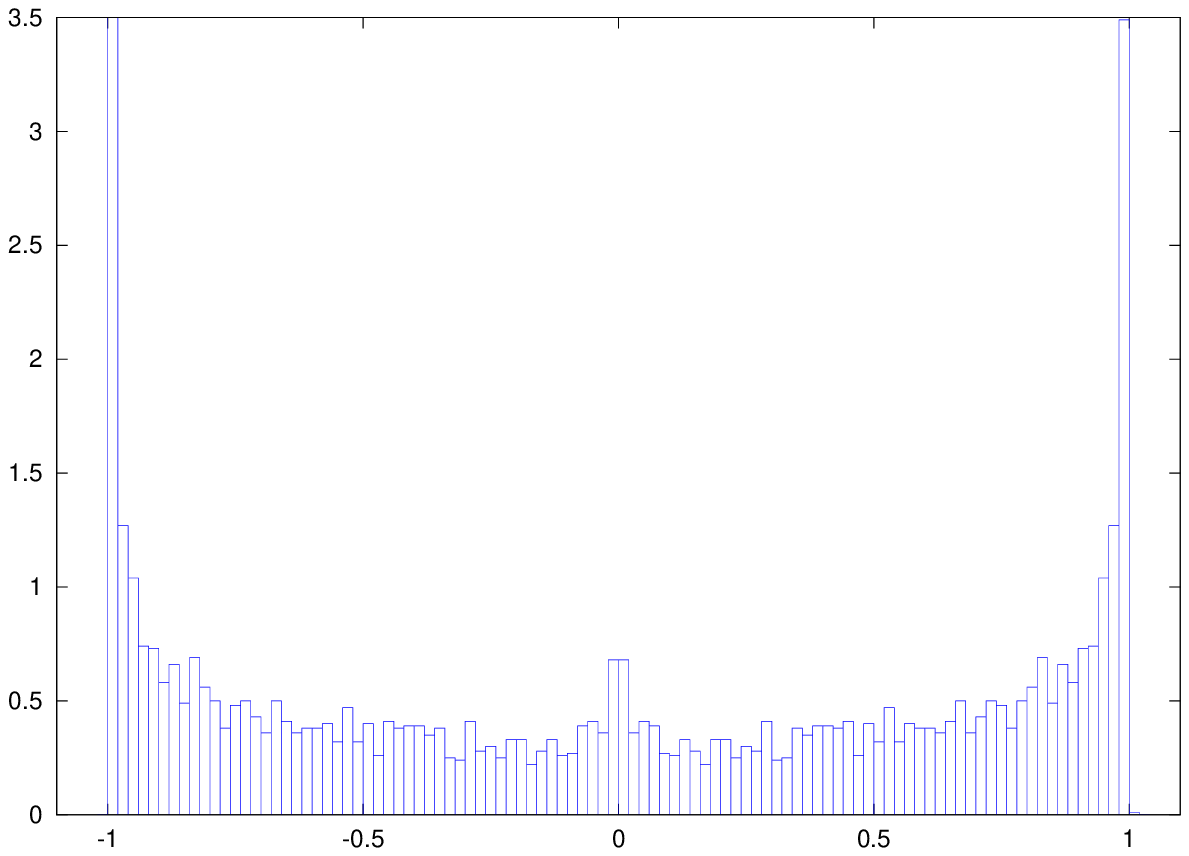}
      
      \includegraphics[scale=0.45]{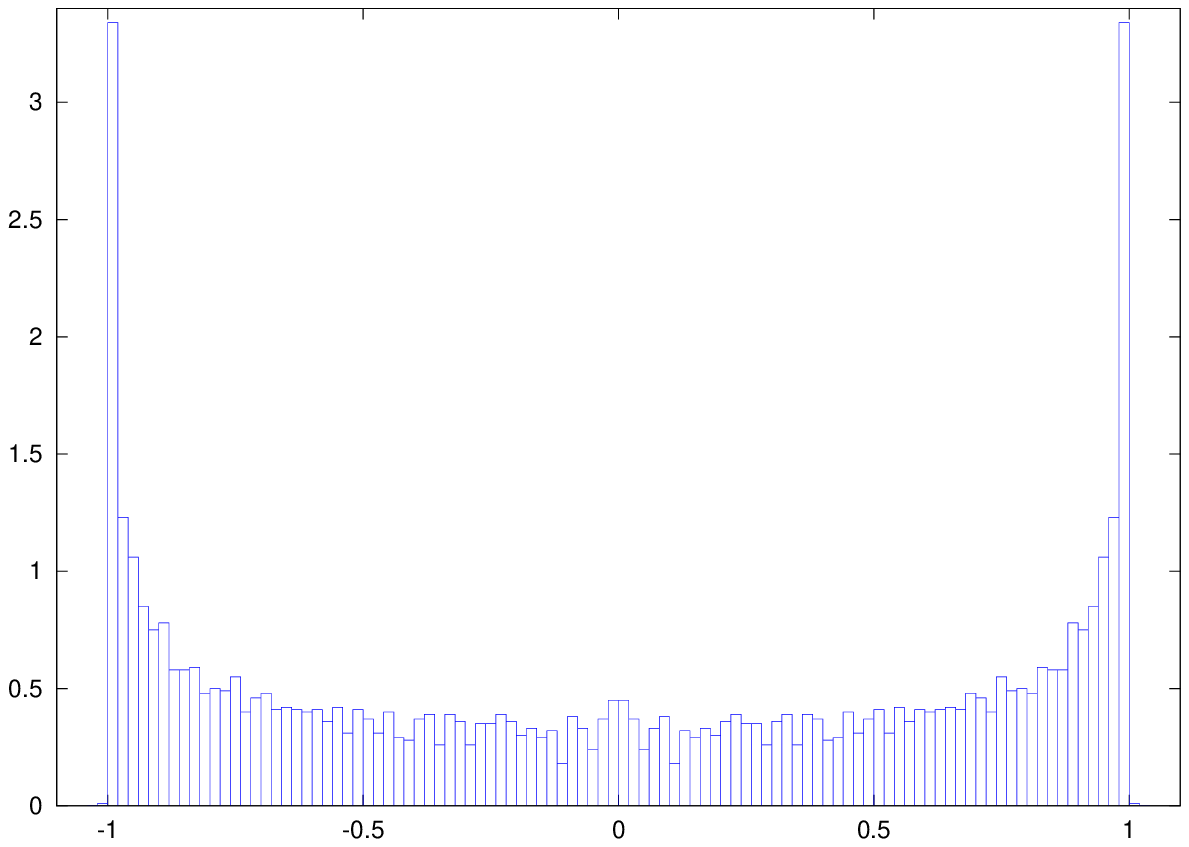}
      \includegraphics[scale=0.45]{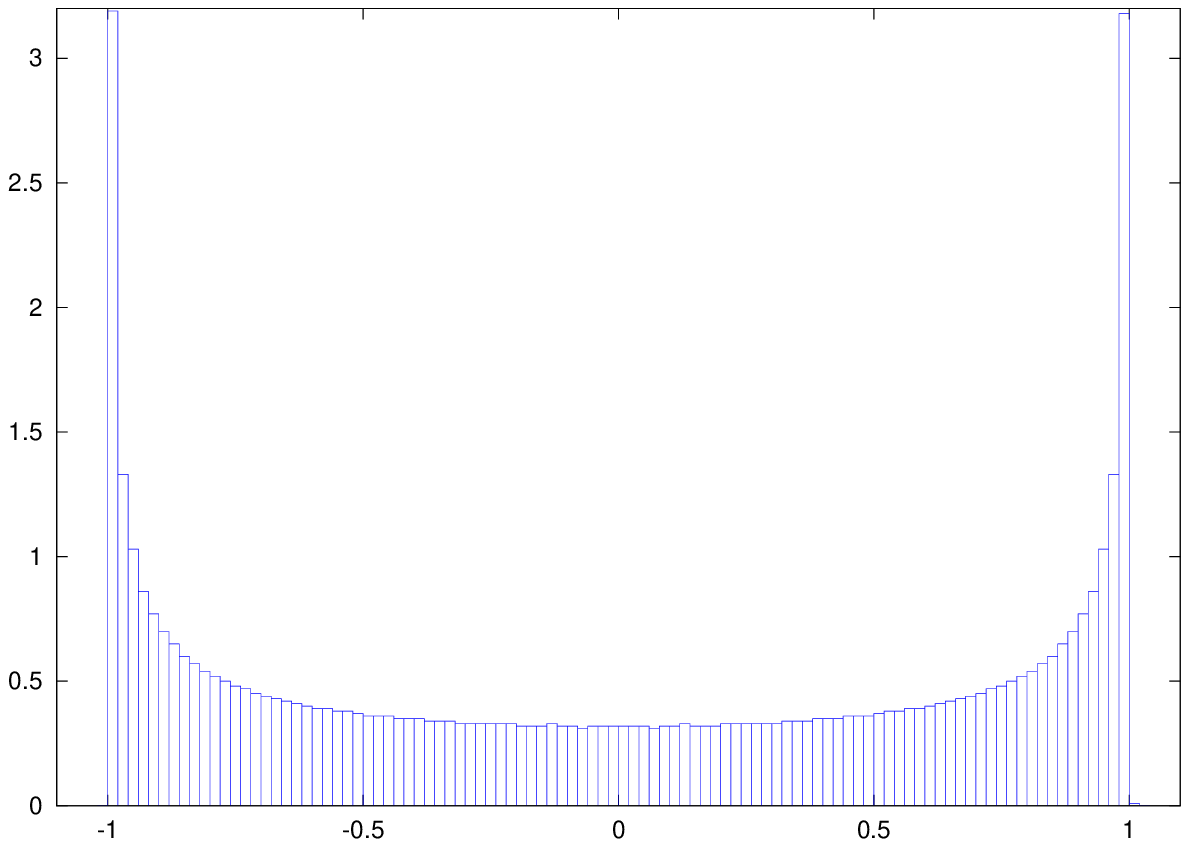}
    \end{center}
    \caption{Plots illustrating Corollary \ref{co:chain}. Each histogram
      corresponds to the spectrum of a single realization of $K$ with
      $n=5000$, for various choices of $\mathcal{L}$. From left to right and
      top to bottom, $\mathcal{L}$ is uniform on $[t,1-t]$ with $t=0$,
      $t=1/8$, $t=1/4$, $t=1/2$. The last case corresponds to the arc--sine
      limiting spectral distribution mentioned in Remark \ref{re:arcsine}.
    }
    \label{fi:tridiagc}
  \end{figure}
\end{center}

\bibliographystyle{amsplain}
\bibliography{bor-cap-cha}

\providecommand{\bysame}{\leavevmode\hbox to3em{\hrulefill}\thinspace}
\providecommand{\MR}{\relax\ifhmode\unskip\space\fi MR }
\providecommand{\MRhref}[2]{%
  \href{http://www.ams.org/mathscinet-getitem?mr=#1}{#2}
}
\providecommand{\href}[2]{#2}
\begin{thebibliography}{10}

\bibitem{anderson-gionnet-zeitouni}
G.~W. Anderson, A.~Guionnet, and O.~Zeitouni, \emph{An {I}ntroduction to
  {R}andom {M}atrices}, Cambridge {U}niversity {P}ress, 2009, to appear.

\bibitem{MR2548495}
A.~Auffinger, G.~Ben~Arous, and S.~P{\'e}ch{\'e}, \emph{Poisson convergence for
  the largest eigenvalues of heavy tailed random matrices}, Ann. Inst. Henri
  Poincar\'e Probab. Stat. \textbf{45} (2009), no.~3, 589--610. \MR{MR2548495}

\bibitem{MR1711663}
Z.~D. Bai, \emph{Methodologies in spectral analysis of large-dimensional random
  matrices, a review}, Statist. Sinica \textbf{9} (1999), no.~3, 611--677, With
  comments by G. J.\ Rodgers and J. W.\ Silverstein; and a rejoinder by the
  author.

\bibitem{MR958213}
Z.~D. Bai and Y.~Q. Yin, \emph{Necessary and sufficient conditions for almost
  sure convergence of the largest eigenvalue of a {W}igner matrix}, Ann.
  Probab. \textbf{16} (1988), no.~4, 1729--1741.

\bibitem{MR1235416}
\bysame, \emph{Limit of the smallest eigenvalue of a large-dimensional sample
  covariance matrix}, Ann. Probab. \textbf{21} (1993), no.~3, 1275--1294.

\bibitem{benarous-guionnet}
G.~Ben~Arous and A.~Guionnet, \emph{The spectrum of heavy tailed random
  matrices}, Comm. Math. Phys. \textbf{278} (2008), no.~3, 715--751.

\bibitem{MR1700749}
P.~Billingsley, \emph{Convergence of probability measures}, second ed., Wiley
  Series in Probability and Statistics: Probability and Statistics, John Wiley
  \& Sons Inc., New York, 1999, A Wiley-Interscience Publication.

\bibitem{MR2371333}
G.~Biroli, J.-P. Bouchaud, and M.~Potters, \emph{On the top eigenvalue of
  heavy-tailed random matrices}, Europhys. Lett. EPL \textbf{78} (2007), no.~1,
  Art. 10001, 5.

\bibitem{MR1956471}
D.~Boivin and J.~Depauw, \emph{Spectral homogenization of reversible random
  walks on {$\mathbb{Z}^d$} in a random environment}, Stochastic Process. Appl.
  \textbf{104} (2003), no.~1, 29--56.

\bibitem{MR1864966}
B.~Bollob{\'a}s, \emph{Random graphs}, second ed., Cambridge Studies in
  Advanced Mathematics, vol.~73, Cambridge University Press, Cambridge, 2001.

\bibitem{MR1890289}
E.~Bolthausen and A.-S. Sznitman, \emph{Ten lectures on random media}, DMV
  Seminar, vol.~32, Birkh\"auser Verlag, Basel, 2002. \MR{MR1890289
  (2003f:60183)}

\bibitem{bordenave-caputo-chafai-ii}
Ch.\ Bordenave, P.~Caputo, and D.~Chafa{\"{\i}}, \emph{Spectrum of large random
  reversible {M}arkov chains -- {H}eavy--tailed weigths on the complete graph},
  arXiv:0903.3528, 2009.

\bibitem{MR2152251}
A.~Bovier and A.~Faggionato, \emph{Spectral characterization of aging: the
  {REM}-like trap model}, Ann. Appl. Probab. \textbf{15} (2005), no.~3,
  1997--2037.

\bibitem{MR2370603}
\bysame, \emph{Spectral analysis of {S}inai's walk for small eigenvalues}, Ann.
  Probab. \textbf{36} (2008), no.~1, 198--254.

\bibitem{MR2166276}
S.~Boyd, P.~Diaconis, P.~Parrilo, and L.~Xiao, \emph{Symmetry analysis of
  reversible {M}arkov chains}, Internet Math. \textbf{2} (2005), no.~1, 31--71.

\bibitem{MR2206341}
W.~Bryc, A.~Dembo, and T.~Jiang, \emph{Spectral measure of large random
  {H}ankel, {M}arkov and {T}oeplitz matrices}, Ann. Probab. \textbf{34} (2006),
  no.~1, 1--38.

\bibitem{chafai-dme}
D.~Chafa\"\i, \emph{The {D}irichlet {M}arkov {E}nsemble}, Journal of
  Multivariate Analysis \textbf{101} (2010), 555--567.

\bibitem{cheliotis-virag}
D.~Cheliotis and B.~Virag, \emph{The spectrum of the random environment and
  localization of noise}, preprint, \texttt{arXiv.math:0804.4814}, to appear in
  Probability Theory and Related Fields, 2008.

\bibitem{MR920811}
P.~G. Doyle and J.~L. Snell, \emph{Random walks and electric networks}, Carus
  Mathematical Monographs, vol.~22, Mathematical Association of America,
  Washington, DC, 1984.

\bibitem{MR0228020}
W.~Feller, \emph{An introduction to probability theory and its applications.
  {V}ol. {I}}, Third edition, John Wiley \& Sons Inc., New York, 1968.
  \MR{MR0228020 (37 \#3604)}

\bibitem{MR637828}
Z.~F{\"u}redi and J.~Koml{\'o}s, \emph{The eigenvalues of random symmetric
  matrices}, Combinatorica \textbf{1} (1981), no.~3, 233--241.

\bibitem{MR1746976}
F.~Hiai and D.~Petz, \emph{The semicircle law, free random variables and
  entropy}, Mathematical Surveys and Monographs, vol.~77, American Mathematical
  Society, Providence, RI, 2000. \MR{MR1746976 (2001j:46099)}

\bibitem{MR0052379}
A.~J. Hoffman and H.~W. Wielandt, \emph{The variation of the spectrum of a
  normal matrix}, Duke Math. J. \textbf{20} (1953), 37--39.

\bibitem{MR1091716}
R.~Horn and C.~Johnson, \emph{Topics in matrix analysis}, Cambridge University
  Press, Cambridge, 1991.

\bibitem{MR2466937}
D.A. Levin, Y.~Peres, and E.L. Wilmer, \emph{Markov chains and mixing times},
  American Mathematical Society, Providence, RI, 2009, With a chapter by James
  G. Propp and David B. Wilson. \MR{MR2466937}

\bibitem{MR2386089}
J.-F. Marckert, \emph{One more approach to the convergence of the empirical
  process to the {B}rownian bridge}, Electron. J. Stat. \textbf{2} (2008),
  118--126. \MR{MR2386089 (2009a:62230)}

\bibitem{MR2129906}
M.~L. Mehta, \emph{Random matrices}, third ed., Pure and Applied Mathematics
  (Amsterdam), vol. 142, Elsevier/Academic Press, Amsterdam, 2004.
  \MR{MR2129906 (2006b:82001)}

\bibitem{MR1710983}
L.~Miclo, \emph{An example of application of discrete {H}ardy's inequalities},
  Markov Process. Related Fields \textbf{5} (1999), no.~3, 319--330.
  \MR{MR1710983 (2000h:60081)}

\bibitem{tetali-montenegro}
R.~Montenegro and P.~Tetali, \emph{Mathematical {A}spects of {M}ixing {T}imes
  in {M}arkov {C}hains}, Foundations and {T}rends in {T}heoretical {C}omputer
  {S}cience, vol. 1:3, Now Publishers, 2006.

\bibitem{MR2480789}
I.~Popescu, \emph{General tridiagonal random matrix models, limiting
  distributions and fluctuations}, Probab. Theory Related Fields \textbf{144}
  (2009), no.~1-2, 179--220. \MR{MR2480789}

\bibitem{MR1490046}
L.~Saloff-Coste, \emph{Lectures on finite {M}arkov chains}, Lectures on
  probability theory and statistics (Saint-Flour, 1996), Lecture Notes in
  Math., vol. 1665, Springer, Berlin, 1997, pp.~301--413.

\bibitem{MR2209438}
E.~Seneta, \emph{Non-negative matrices and {M}arkov chains}, Springer Series in
  Statistics, Springer, New York, 2006, Revised reprint of the second (1981)
  edition [Springer-Verlag, New York; MR0719544]. \MR{MR2209438}

\bibitem{MR2081462}
A.~Soshnikov, \emph{Poisson statistics for the largest eigenvalues of {W}igner
  random matrices with heavy tails}, Electron. Comm. Probab. \textbf{9} (2004),
  82--91 (electronic).

\bibitem{MR2198849}
A.-S. Sznitman, \emph{Topics in random walks in random environment}, School and
  {C}onference on {P}robability {T}heory, ICTP Lect. Notes, XVII, Abdus Salam
  Int. Cent. Theoret. Phys., Trieste, 2004, pp.~203--266 (electronic).
  \MR{MR2198849 (2007b:60247)}

\bibitem{tao-vu-cirlaw-bis}
T.~Tao and V.~Vu, \emph{Random matrices: {U}niversality of {E}{S}{D}s and the
  circular law}, preprint \texttt{arXiv:0807.4898 [math.PR]} to appear in the
  Annals of Probability, 2008.

\bibitem{MR1964483}
C.~Villani, \emph{Topics in optimal transportation}, Graduate Studies in
  Mathematics, vol.~58, American Mathematical Society, Providence, RI, 2003.
  \MR{MR1964483 (2004e:90003)}

\bibitem{MR2432537}
Van Vu, \emph{Random discrete matrices}, Horizons of combinatorics, Bolyai Soc.
  Math. Stud., vol.~17, Springer, Berlin, 2008, pp.~257--280. \MR{MR2432537
  (2009i:15034)}

\bibitem{Zakharevich}
I.~Zakharevich, \emph{A generalization of {W}igner's law}, Comm. Math. Phys.
  \textbf{268} (2006), no.~2, 403--414.

\bibitem{MR2071631}
O.~Zeitouni, \emph{Random walks in random environment}, Lectures on probability
  theory and statistics, Lecture Notes in Math., vol. 1837, Springer, Berlin,
  2004, pp.~189--312. \MR{MR2071631 (2006a:60201)}

\end{thebibliography}

\end{document}